\newtheorem{theorem}{Theorem}
\newtheorem{proposition}[theorem]{Proposition}
\newtheorem{lemma}[theorem]{Lemma}
\theoremstyle{definition}
\newtheorem{definition}[theorem]{Definition}
\newtheorem{remark}[theorem]{Remark}
\numberwithin{theorem}{section}
\numberwithin{equation}{section}
\newcommand{\define}[1]{\emph{#1}}
\newcommand{\kk}{{k}}
\newcommand{\smpr}{{\operatorname{sp}}}
\newcommand{\sqp}{{\operatorname{sq}}}
\newcommand{\cs}{{\operatorname{cs}}}
\newcommand{\smpp}{{\operatorname{sp}}}
\DeclareMathOperator{\HH}{H}
\DeclareMathOperator{\BB}{B}
\DeclareMathOperator{\GL}{GL}
\DeclareMathOperator{\K}{K}
\DeclareMathOperator{\DM}{DM}
\DeclareMathOperator{\SB}{SB}
\DeclareMathOperator{\Var}{Var}
\DeclareMathOperator{\Spec}{Spec}
\DeclareMathOperator{\id}{id}
\newcommand{\sptag}[1]{\href{http://stacks.math.columbia.edu/tag/#1}{Tag~#1}}
\newcommand*{\da@rightarrow}{\mathchar"0\hexnumber@\symAMSa 4B }
\newcommand*{\da@leftarrow}{\mathchar"0\hexnumber@\symAMSa 4C }
\newcommand*{\xdashrightarrow}[2][]{%
  \mathrel{%
    \mathpalette{\da@xarrow{#1}{#2}{}\da@rightarrow{\,}{}}{}%
  }%
}
\newcommand{\xdashleftarrow}[2][]{%
  \mathrel{%
    \mathpalette{\da@xarrow{#1}{#2}\da@leftarrow{}{}{\,}}{}%
  }%
}
\newcommand*{\da@xarrow}[7]{%
  \sbox0{$\ifx#7\scriptstyle\scriptscriptstyle\else\scriptstyle\fi#5#1#6\m@th$}%
  \sbox2{$\ifx#7\scriptstyle\scriptscriptstyle\else\scriptstyle\fi#5#2#6\m@th$}%
  \sbox4{$#7\dabar@\m@th$}%
  \dimen@=\wd0 %
  \ifdim\wd2 >\dimen@
    \dimen@=\wd2 %
  \fi
  \count@=2 %
  \def\da@bars{\dabar@\dabar@}%
  \@whiledim\count@\wd4<\dimen@\do{%
    \advance\count@\@ne
    \expandafter\def\expandafter\da@bars\expandafter{%
      \da@bars
      \dabar@ 
    }%
  }%
  \mathrel{#3}%
  \mathrel{%
    \mathop{\da@bars}\limits
    \ifx\\#1\\%
    \else
      _{\copy0}%
    \fi
    \ifx\\#2\\%
    \else
      ^{\copy2}%
    \fi
  }%
  \mathrel{#4}%
}
\title[Grothendieck group of DM-stacks]{Weak factorization and the Grothendieck group of Deligne--Mumford stacks}
\begin{document}
\subjclass[2010]{14A20, 14E05, 14F42}
\keywords{Grothendieck group of varieties, Deligne--Mumford stack, destackification, weak factorization}
\author{Daniel Bergh}
\address{%
Department of Mathematical Sciences\\
University of Copenhagen\\
Universitetsparken~5\\
DK-2100 Copenhagen Ø,
Denmark.}
\email{dbergh@math.ku.dk}
\begin{abstract}
We construct a presentation for the Grothendieck group of Deligne--Mumford stacks over a field of characteristic zero.
The generators for this presentation are smooth, proper Deligne--Mumford stacks and the relations are expressed in terms of stacky blow-ups.
In the process we prove a version of the weak factorization theorem for Deligne--Mumford stacks.
\end{abstract}

\maketitle

\section{Introduction}
Throughout the article, we fix a field $\kk$ of characteristic 0.
All algebraic stacks are understood to be of finite presentation over $\kk$.

In \cite{bittner2004}, Heinloth (née~Bittner) constructs a presentation for the Grothendieck group
$\K_0(\Var_\kk)$ of varieties with generators which are smooth, proper varieties,
and relations in terms of blow-ups.
In this paper, we generalize Heinloth's result, and give a \emph{Bittner presentation} for the
Grothendieck group $\K_0(\DM_\kk)$ of Deligne--Mumford stacks (Theorem~\ref{t-main}).

A key ingredient in Heinloth's proof is the \emph{weak factorization theorem},
originally by~Włodarczyk~\cite{wlodarczyk2003},
which states that any birational map between smooth, proper varieties factors
as a sequence of blow-ups and blow-downs in smooth centers.

In birational geometry for Deligne--Mumford stacks, 
\emph{stacky blow-ups} (see Definition~\ref{def-stacky-blow-up})
play a similar role as usual blow-ups do in birational geometry for varieties.
A generalization of the weak factorization theorem states that any birational
map between smooth, proper Deligne--Mumford stacks factors
as a sequence of stacky blow-ups and stacky blow-downs in smooth centers.
A proof of this theorem has been given by Rydh (unpublished),
and, very recently, independently by Harper \cite{harper2017}.
In this article we also formulate and prove a version of the theorem (Theorem~\ref{th-intro-wf-dm}).
The proof relies on \emph{equivariant weak factorization} by Abramovich et~al. \cite{abramovich2002}
and \emph{functorial destackification} by the author~\cite{bergh2017}.
See Remark~\ref{rem-attribution} for a more detailed account on how
the results by Rydh and Harper relate to the results presented in this article.

A ring homomorphism with $\K_0(\Var_\kk)$ as domain is sometimes called a \emph{motivic measure}.
The Bittner presentation gives a convenient way construct motivic measures from invariants defined on
smooth and proper varieties,
and several interesting results on the structure of $\K_0(\Var_\kk)$ depends on this technique.
As an example,
Larsen and Lunts \cite{ll2003} showed that
the association which takes a smooth proper variety to its stably birational class extends
to a motivic measure $\K_0(\Var_\kk) \to \mathbb{Z}[\SB_\kk]$
whose kernel is generated by the class of the affine line.
Here $\mathbb{Z}[\SB_\kk]$ denotes the monoid ring on the monoid $\SB_\kk$ of stably birational classes of $k$-varieties.
This invariant was used by Poonen~\cite{poonen2002} to construct zero divisors in $\K_0(\Var_\mathbb{Q})$,
and later by Borisov~\cite{borisov2018} to show that in fact the class of the affine line is a zero divisor in $\K_0(\Var_\kk)$.
As another example of motivic measures constructed using the Bittner presentation,
Ekedahl~\cite{ekedahl2009gg} considered the invariant $\HH^\bullet(-, \mathbb{Z}_\ell)$ taking a smooth proper variety to its
$\ell$-adic cohomology groups.
He used this motivic measure to show that for any non-special, connected algebraic group $G$,
there exists a $G$-torsor $P \to X$ such that $\{G\}\{X\} \neq \{P\}$ in $\K_0(\Var_\kk)$~\cite{ekedahl2009ap}.

It is the hope of the author that the results presented here will have similar
impact on the understanding of $\K_0(\DM_k)$ as Heinloth's result has on the understaning of~$\K_0(\Var_k)$.
An application is given by the author jointly with Gorchinskiy--Larsen--Lunts~\cite{bgll2017}.
Here we extend the \emph{categorical measure},
originally defined on~$\K_0(\Var_k)$ by Bondal--Larsen--Lunts~\cite{bll2004},
to a motivic measure defined on $\K_0(\DM_k)$.
The categorical measure takes vales in the Grothendieck group of saturated $k$-linear differential graded categories
and the measure, in principle, associates a smooth, proper Deligne--Mumford stack to the class of its derived category of
perfect complexes.
We use this measure to prove a conjecture of Galkin--Shinder regarding the so called
\emph{categorical zeta function}~\cite[Proposition~6.8]{bgll2017}.
An independent proof of this is given by Gyenge in \cite{gyenge2017}, also using the results from this article.
In \cite[Section~6.4]{bgll2017}, the categorical measure on $\K_0(\DM_k)$ is also used to shed some light on a conjecture by Polishchuk--Van den Bergh regarding a semi-orthogonal decomposition
of a certain equivariant derived category~\cite[Conjecture~A]{pvdb2015}.

\subsection{Main results}
The Grothendieck group $\K_0(\DM_\kk)$ of Deligne--Mumford stacks is defined as the group freely generated by
equivalence classes $\{X\}$ of Deligne--Mumford stacks $X$ subject to the \emph{scissors relations}
\begin{equation}
\label{eq-scissor}
\{X\} = \{X \setminus Z\} + \{Z\}
\end{equation}
where $Z \subset X$ is a closed substack.
The group $\K_0(\DM_\kk)$ also has a ring structure with multiplication determined by
\begin{equation}
\label{eq-multiplication}
\{X\}\cdot \{Y\} := \{X \times Y\}
\end{equation}
for Deligne--Mumford stacks $X$ and $Y$.

Consider the group $\K_0(\DM^\smpp_\kk)$ generated by equivalence classes $\{X\}$ of smooth and proper Deligne--Mumford stacks $X$ subject to the \emph{stacky blow-up relations}
\begin{equation}
\label{eq-blow-up-rel}
\{\widetilde{X}\} - \{E\} = \{X\} - \{Z\}, \qquad \{\emptyset\} = 0
\end{equation}
where $\widetilde{X}$ is a stacky blow-up of a smooth, proper Deligne--Mumford stack
$X$ in a smooth center $Z$, and $E$ is the exceptional divisor.
This group has a multiplicative structure,
defined by \eqref{eq-multiplication} for smooth and proper Deligne--Mumford stacks $X$ and $Y$,
and we have an obvious ring homomorphism
\begin{equation}
\label{eq-bittner-presentation}
\K_0(\DM^\smpp_\kk) \to \K_0(\DM_\kk), \qquad \{X\} \mapsto \{X\},
\end{equation}
where $X$ is a smooth proper Deligne--Mumford stack.
Our main result on $\K_0(\DM_\kk)$ is the following theorem.
\begin{theorem}[Bittner presentation for Deligne--Mumford stacks]
\label{t-main}
The ring homomorphism \eqref{eq-bittner-presentation} is an isomorphism.
\end{theorem}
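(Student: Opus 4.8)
The plan is to follow the strategy of Heinloth's proof in the stacky setting, constructing an explicit inverse to \eqref{eq-bittner-presentation} whose well-definedness is powered by the stacky weak factorization theorem (Theorem~\ref{th-intro-wf-dm}). Since \eqref{eq-bittner-presentation} is already a ring homomorphism (the stacky blow-up relations hold in $\K_0(\DM_\kk)$: if $\widetilde{X} \to X$ is a stacky blow-up in a smooth center $Z$ with exceptional divisor $E$, then $\widetilde{X} \setminus E \cong X \setminus Z$, so the scissors relation \eqref{eq-scissor} gives $\{\widetilde{X}\} - \{E\} = \{X\} - \{Z\}$), it remains only to prove surjectivity and injectivity.

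For surjectivity I would argue by Noetherian induction on dimension: any Deligne--Mumford stack $X$ contains a dense smooth open substack $U$, and by the scissors relation $\{X\} = \{U\} + \{X \setminus U\}$ the complement has strictly smaller dimension, reducing us to the smooth case. For smooth $U$, compactification and resolution in characteristic zero furnish a \emph{good compactification} $U \hookrightarrow Y$ with $Y$ smooth and proper and $D = Y \setminus U = \bigcup_i D_i$ a simple normal crossings divisor; inclusion--exclusion over the stratification by the $D_i$ expresses $\{U\} = \sum_{I}(-1)^{|I|}\{D_I\}$ with each $D_I = \bigcap_{i\in I} D_i$ smooth and proper (and $D_\emptyset = Y$), proving surjectivity.

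The substance of the argument is injectivity, which I would establish by constructing a ring homomorphism $\phi \colon \K_0(\DM_\kk) \to \K_0(\DM^\smpp_\kk)$ inverse to \eqref{eq-bittner-presentation}. On a smooth Deligne--Mumford stack $U$ I set $\phi(\{U\}) := \sum_{I}(-1)^{|I|}\{D_I\}$ using a good compactification $(Y, D)$ as above. The crucial lemma is that this value is independent of the chosen compactification. Here I would invoke Theorem~\ref{th-intro-wf-dm}: any two good compactifications of $U$ are birational over $U$, hence are connected by a chain of stacky blow-ups and blow-downs in smooth centers meeting the boundary with normal crossings. It therefore suffices to check that $\sum_I (-1)^{|I|}\{D_I\}$ is unchanged under a single such stacky blow-up $Y' \to Y$. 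Using the stacky blow-up relations \eqref{eq-blow-up-rel} in $\K_0(\DM^\smpp_\kk)$, applied to $Y$ itself and to each stratum $D_I$ that the center meets, the alternating sum for $Y'$ should telescope to that for $Y$, the exceptional contributions cancelling against the blow-up relations of the strata. Once independence of compactification is known, extending $\phi$ to all of $\K_0(\DM_\kk)$ is formal: one checks compatibility with \eqref{eq-scissor} via a compactification of the pair $(X, Z)$ and multiplicativity via products, so that $\phi$ descends to a ring homomorphism. By construction the composite with \eqref{eq-bittner-presentation} is the identity on generators $\{X\}$ with $X$ smooth and proper (take $Y = X$, empty boundary), and the composite in the other order is the identity by the inclusion--exclusion formula, so the two maps are mutually inverse.

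The main obstacle I anticipate is precisely the invariance of the inclusion--exclusion formula under a single stacky blow-up. Unlike the classical setting, the stacky blow-ups supplied by Theorem~\ref{th-intro-wf-dm} include root stacks along smooth divisors in addition to ordinary blow-ups in higher-codimension centers, so the telescoping computation must be carried out for both types and must track how the boundary divisors and their intersections $D_I$ transform --- in particular how a root stack interacts with a boundary component it is taken along. Managing the combinatorics of the strata, and arranging (via the boundary-respecting, toroidal form of equivariant weak factorization underlying Theorem~\ref{th-intro-wf-dm}) that the center of each modification has normal crossings with the boundary so that all $D_I$ and their strict transforms remain smooth and proper, is where the bulk of the technical work will lie.
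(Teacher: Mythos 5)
Your overall strategy --- inclusion--exclusion classes attached to SNC compactifications, made well-defined by weak factorization plus the stacky blow-up relations, and compatible with scissors relations via embedded resolution --- is the same as the paper's, and the obstacle you single out as the main one (the telescoping computation under a single stacky blow-up, including root stacks) is handled there exactly as you predict and causes no trouble (Proposition~\ref{pr-transform}). The genuine gaps are in two steps you treat as routine. First, ``compactification and resolution in characteristic zero furnish a good compactification $U \hookrightarrow Y$'' is not available for an arbitrary smooth Deligne--Mumford stack: there is no Nagata-type theorem producing a dense open embedding into a smooth \emph{proper Deligne--Mumford stack} in this generality. The paper must restrict to smooth \emph{quasi-projective} stacks in the sense of Kresch, for which \cite[Theorem~5.3]{kresch2009} supplies projective compactifications, and accordingly it defines its would-be inverse $\psi$ only on $\K_0(\DM^\sqp_k)$, using Proposition~\ref{pr-alt-def} (every Deligne--Mumford stack stratifies into quasi-projective pieces) to relate this group to $\K_0(\DM_k)$. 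Second, the weak factorization theorems of the paper (Theorem~\ref{th-intro-wf-dm} and the pairs version Theorem~\ref{th-wf-dm}) assume both stacks are \emph{global quotient stacks}; two arbitrary ``good compactifications'' of $U$ are not known to satisfy this, so your independence-of-compactification lemma cannot be proved at the level of generality you set up. Projective compactifications \emph{are} global quotients, which is precisely why the paper's quasi-projective route makes weak factorization applicable.

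These same issues break your closing step ``take $Y = X$, empty boundary'': a smooth proper Deligne--Mumford stack need not be projective, nor is it known to be a global quotient (see Remark~\ref{rem-global-quotient}), so $X$ itself is not an admissible compactification in the corrected setup, and the identity $\phi\circ\varphi = \id$ on generators does not follow. The paper's endgame is different and avoids this: it shows that $\varphi \circ \psi$ coincides with the isomorphism of Proposition~\ref{pr-alt-def}, and then proves that $\psi$ is \emph{surjective} by invoking Choudhury's theorem \cite[Theorem~4.3]{choudhury2012} --- every smooth proper Deligne--Mumford stack becomes projective after finitely many blow-ups in smooth centers --- combined with the blow-up relations \eqref{eq-blow-up-rel}, induction on dimension, and descending induction along the blow-up sequence. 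Your proposal needs either this ingredient or some substitute for it (and for the compactification and global-quotient points above) to be complete.
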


The proof of Theorem~\ref{t-main} relies on the existence of weak factorizations for birational maps
between smooth, proper Deligne--Mumford stacks into stacky blow-ups and blow-downs.

\begin{theorem}[Weak factorization for Deligne--Mumford stacks]
\label{th-intro-wf-dm}
Let $\varphi\colon Y \dashrightarrow X$
be a proper birational map of smooth, separated Deligne--Mumford stacks of finite type
over a field $\kk$ of characteristic zero.
Assume that both $X$ and $Y$ are global quotient stacks (see Remark~\ref{rem-global-quotient}),
and let $U \subset X$ be an open substack over which $\varphi$ is an isomorphism.
Then $\varphi$ factors as a sequence
\begin{equation}
\label{eq-intro-wf}
Y := X_n \xdashrightarrow{\varphi_n} \cdots \xdashrightarrow{\varphi_1} X_0 =: X
\end{equation}
of proper birational maps $\varphi_i$ satisfying the following properties:
\begin{enumerate}
\item
\label{it-wf-stacky-blow-up}
either $\varphi_i$ or $\varphi_i^{-1}$ is a stacky blow-up centered in a smooth locus;
\item
\label{it-wf-invariant-locus}
each $\varphi_i$ is an isomorphism over $U$;
\item
\label{it-wf-normal-crossings}
if the exceptional loci $D_i := X_i\setminus U$ are simple normal crossing divisors for $i = 0, n$,
then the same holds true for all $i$,
and all stacky blow-ups have centers which have normal crossings with the respective~$D_i$.
\item
\label{it-wf-defined}
there is an integer $n_0$ such that $X_i \dashrightarrow X$ is everywhere defined whenever $i \leq n_0$ and
$X_i \dashrightarrow Y$ is everywhere defined whenever $i \geq n_0$;
\item
\label{it-wf-projective}
the induced birational morphisms $(X_i)_\cs \to X_\cs$ on coarse spaces is projective for $i \leq n_0$
and similarly for $(X_i)_\cs \to Y_\cs$ when $i \geq n_0$.
\end{enumerate}
\end{theorem}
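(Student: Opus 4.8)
The plan is to reduce the stacky weak factorization to the equivariant weak factorization theorem of Abramovich et~al.\ via global quotient presentations, and to use functorial destackification to account for the genuinely stacky directions of the factorization. First I would exploit the hypothesis that $X$ and $Y$ are global quotients to choose a common presentation: write $X = [V/G]$ and $Y = [W/G]$ for a single affine algebraic group $G$ (which may be taken to be $\GL_N$ for $N$ large) acting with finite stabilizers on smooth separated $G$-schemes $V$ and $W$, arranged so that $\varphi$ is induced by a $G$-equivariant birational map $W \dashrightarrow V$. The open substack $U$ and its preimage lift to a $G$-invariant open over which the equivariant map is an isomorphism.

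Next I would apply the equivariant weak factorization theorem to $W \dashrightarrow V$. This produces a factorization $W = V_n \dashrightarrow \cdots \dashrightarrow V_0 = V$ through smooth $G$-schemes $V_i$ in which each step is a $G$-equivariant blow-up or blow-down along a smooth $G$-invariant center, is an isomorphism over the invariant open, respects an invariant simple normal crossings structure, and carries the monotonicity index of property (d). Passing to quotient stacks $X_i := [V_i/G]$ descends each equivariant blow-up along a smooth invariant center $C \subset V_i$ to the blow-up of $X_i$ along the smooth closed substack $[C/G]$, which is a stacky blow-up in the sense of Definition~\ref{def-stacky-blow-up}; properties (a), (b) and (d) transfer directly from the equivariant statements, and the invariant normal crossings data descends to give (c).

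The genuinely new ingredient, and the main obstacle, is that the stacky blow-ups allowed in Definition~\ref{def-stacky-blow-up} include root-stack constructions that are invisible to the coarse equivariant picture, while conversely a naive equivariant factorization need neither keep the quotients Deligne--Mumford nor keep the coarse-space morphisms under control. This is where I would invoke functorial destackification: applying it to $X$ and $Y$, compatibly along the factorization \eqref{eq-intro-wf}, first reduces the stacks to a standard form in which the stacky structure is concentrated, as root stacks and gerbes, along a simple normal crossings divisor over a smooth coarse space. The remaining birational geometry then genuinely takes place on the coarse spaces and is governed by the equivariant theorem, whereas the root-stack directions are supplied by the destackification blow-ups themselves. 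The functoriality of destackification is precisely what permits the two procedures to be interleaved compatibly with the $G$-action and with the invariant open locus.

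Finally I would establish property (e) by tracking projectivity and properness through the construction: an equivariant blow-up along an invariant center induces a projective birational morphism on good quotients, hence on coarse spaces, and the destackification blow-ups likewise induce projective morphisms on coarse spaces; combining these with the monotonicity index $n_0$ shows that $(X_i)_\cs \to X_\cs$ is projective for $i \le n_0$ and $(X_i)_\cs \to Y_\cs$ is projective for $i \ge n_0$, with properness of each $\varphi_i$ following in tandem. I expect the most delicate bookkeeping to be maintaining the simple normal crossings condition (c) together with $G$-equivariance simultaneously through both the equivariant factorization steps and the destackification steps; this compatibility, rather than any single step in isolation, should be the crux of the argument.
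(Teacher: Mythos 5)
Your reduction collapses at the very first step: you assume that $X$ and $Y$ admit presentations $X=[V/G]$, $Y=[W/G]$ by a \emph{single} group $G$ such that $\varphi$ lifts to a $G$-equivariant proper birational map $W\dashrightarrow V$. This is not merely unjustified; it is false in general. Take $X=\mathbb{P}^2$, let $D\subset X$ be a smooth curve, and let $Y$ be the root stack of order $2$ along $D$, with $\varphi\colon Y\to X$ the canonical map (proper, birational, an isomorphism over $U=X\setminus D$; both stacks are smooth, proper global quotients). Any presentation $X=[V/G]$ has $G$ acting freely on $V$, because $X$ is a scheme. If an equivariant lift $W\dashrightarrow V$ existed, equivariant weak factorization would produce smooth $G$-spaces $V_i$ connecting $V$ and $W$ by equivariant blow-ups and blow-downs in smooth invariant centers. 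Stabilizers inject along equivariant morphisms, so every $V_i$ admitting a morphism to $V$ carries a free action; and in characteristic zero a blow-down cannot destroy freeness either, since a nontrivial finite stabilizer at a point of a center acts on the projectivized normal space with a fixed point, so the blow-up of a non-free action is never free. Hence every $V_i$, and in particular $W$, would carry a free action, forcing $Y=[W/G]$ to be an algebraic space --- contradicting the fact that $Y$ has $\mu_2$-stabilizers along a divisor. So no such lift can exist, and your steps one and two cannot even get started precisely in the cases where root constructions are needed. Your third paragraph correctly senses this problem, but the proposed remedy --- interleaving destackification with ``the factorization'' --- is circular, since the equivariant factorization it is supposed to repair does not exist.

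The paper's proof is organized exactly so as to avoid comparing presentations of $X$ and $Y$: it allows different groups (representable morphisms $X\to\BB G$ and $Y\to\BB H$), builds a common resolution $(Z,F)$ dominating both sides (normalize $X\times Y$ in the diagonal of $U$, then apply functorial resolution and principalization), and applies \emph{relative} destackification to $(Z,F)$ twice --- once over $\BB G$ and once over $\BB H$. Destackification expresses $Z$, via admissible stacky blow-ups and root constructions, in terms of relative coarse spaces $\underline{X}'$ and $\underline{Y}'$ that are \emph{representable} over $\BB G$, respectively $\BB H$; only then is equivariant weak factorization invoked, namely for the representable proper morphisms $\underline{X}'\to X$ over $\BB G$ and $\underline{Y}'\to Y$ over $\BB H$. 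The root-stack directions thus enter through the coarse-space maps of destackification, never through the equivariant theorem. Finally, your treatment of property \ref{it-wf-projective} also misses a real difficulty: projectivity of the coarse-space morphisms is not automatic from properness of the common resolution; the paper must choose $(Z,F)$ using the Gruson--Raynaud version of Chow's lemma (applied twice) so that $Z_\cs\to X_\cs$ and $Z_\cs\to Y_\cs$ are projective before the projectivity clause of the equivariant theorem can be used.
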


\begin{remark}
\label{rem-attribution}
To the authors knowledge, a version of Theorem~\ref{th-intro-wf-dm} was first proven by Rydh,
who announced the result at a conference in Kiola, March~2016.
The proof by Rydh is an adaptation of the techniques from the original proof by Włodarczyk.
However, at the time of writing, Rydh's proof is not yet available in written form.

Being aware of the work by Rydh,
I found a different proof of Theorem~\ref{th-intro-wf-dm},
which I present in this paper.
It relies on equivariant weak factorization for varieties by Abramovich~et\ al.~\cite{abramovich2002},
together with functorial destackification by myself \cite{bergh2017}.
The proof is quite short owing to the fact that most of the work is done by the theorems
from the cited sources.

While still working on this article,
a proof of Theorem~\ref{th-intro-wf-dm} by Harper~\cite{harper2017}
appeared as a preprint.
Harper uses ideas similar to those presented here.
Since my proof is not long,
and in some aspects slightly different than Harper's,
I decided to nevertheless include my proof in this paper.
\end{remark}

\begin{remark}
\label{rem-global-quotient}
It should be stressed that both the result by Rydh and by Harper are
more general than Theorem~\ref{th-intro-wf-dm},
since they do not assume that the stacks $X$ and $Y$ be global quotient stacks
(i.e., of the form $[V/\GL_n]$ where $V$ is an algebraic space with an action by $\GL_n$).
However, many interesting classes of Deligne--Mumford stacks are known to be global quotients.
For instance, this holds for any orbifold due to Edidin--Hassett--Kresch--Vistoli~\cite[Theorem~2.18]{ehkv2001}
and for any smooth, separated Deligne--Mumford stack with quasi-projective coarse space due to Kresch~\cite[Theorem ~4.4]{kresch2009}.
To my knowledge,
there are no known examples of smooth Deligne--Mumford stacks which are not global quotients.
Hence it is unclear how restrictive this assumption is.
\end{remark}

\subsection{Preliminaries}
The following standard results on algebraic stacks and birational geometry will be used freely.

Due to a classical result by Keel--Mori \cite{km1997},
any algebraic stack $X$ with finite inertia admits a \emph{coarse space}, which we denote by $X_\cs$.
More generally, since the formation of the coarse space respects flat base change,
it is easy to see that any morphism $X \to S$ of algebraic stacks with finite relative inertia
\cite[\sptag{036X}]{stacks-project} factors through the \emph{relative coarse space} $X \to \underline{X} \to S$.
This factorization is characterized by the property that it is initial among factorizations
$X \to Y \to S$ with $Y \to S$ representable by an algebraic space.

Since we are working over a field of characteristic zero,
any reduced algebraic stack admits a \emph{resolution of singularities}.
This follows from the functorial version of Hironaka's theorem on resolution
of singularities for varieties \cite{hironaka1964}, by Bierstone--Milman \cite{bm1997}.
Moreover, we will freely use \emph{functorial embedded resolution of singularities}
and \emph{functorial principalization}.
See \cite[Section~1.2]{abramovich2002} for a nice overview of these concepts.

\subsection{Outline}
In Section~\ref{basic}, we give some elementary results on alternative presentations for $\K_0(\DM_k)$.
In Section~\ref{weak}, we prove the weak factorization theorem Deligne--Mumford stacks
(Theorem~\ref{th-intro-wf-dm}).
Finally, we give the proof for the Bittner presentation for $\K_0(\DM_k)$ (Theorem~\ref{t-main})
in Section~\ref{bittner}.

\subsection{Acknowledgments}
I would like to thank Sergey~Gorchinskiy, Valery Lunts and David~Rydh for valuable comments.
 
\section{Alternative presentations}
\label{basic}
In this section we give some alternative presentations for $\K_0(\DM_k)$ which can be obtained without using weak factorization. 

Let $\mathcal{P}$ be a class of Deligne--Mumford stacks.
We let $\K_0(\DM_k^\mathcal{P})$ denote the group generated by isomorphism classes $\{X\}$
of objects $X$ in $\mathcal{P}$ subject to the scissors relations \eqref{eq-scissor} with respect to
closed immersions $Z \subset X$ of objects in $\mathcal{P}$.
It is clear that we have group homomorphism
\begin{equation}
\label{eq-comparison}
\K_0(\DM_k^\mathcal{P}) \to \K_0(\DM_k)
\end{equation}
induced by the inclusion of $\mathcal{P}$ into the class of all Deligne--Mumford stacks.
Moreover, if $\mathcal{P}$ is closed under products in the category of Deligne--Mumford stacks,
it is clear that the group $\K_0(\DM_k^\mathcal{P})$ has an induced ring structure,
making the homomorphism \eqref{eq-comparison} a ring homomorphism.

\begin{proposition}
\label{pr-alt-def}
Let $\mathcal{P}$ be a class of Deligne--Mumford stacks.
Assume that every Deligne--Mumford stack admits a stratification into objects in $\mathcal{P}$.
Then \eqref{eq-comparison} is an isomorphism.

In particular, this holds if we take $\mathcal{P}$ to be the class of Deligne--Mumford stacks $X$ such that either
\begin{enumerate}
\item
\label{it-gerbe}
$X$ is a gerbe;
\item
\label{it-affine}
$X$ has coarse space which is affine;
\item
\label{it-smooth}
$X$ is smooth;
\item
\label{it-global-finite}
$X$ is a global quotient by a finite, constant group;
\item
\label{it-finite-inertia}
$X$ has finite inertia;
\item
\label{it-separated}
$X$ is separated;
\item
\label{it-global}
$X$ is a global quotient stack. That is, we have $X \cong [Y/\GL_n]$ where $Y$ is an algebraic space endowed with a $\GL_n$-action;
\item
\label{it-quasi-projective}
$X$ is quasi-projective in the sense of \cite[Definition~5.5]{kresch2009}.
That is, $X \cong [Y/\GL_n]$ where $Y$ is a quasi-projective variety and $\GL_n$ acts linearly and properly on $X$\cite[Proposition~5.1, Theorem~5.3]{kresch2009}.
\end{enumerate}
\end{proposition}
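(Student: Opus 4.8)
The plan is to prove the general statement first, then verify that each of the listed classes $\mathcal{P}$ satisfies the stratification hypothesis.

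For the general statement, I would construct an inverse to the homomorphism \eqref{eq-comparison}. The idea is that the stratification hypothesis lets us express the class $\{X\}$ of an arbitrary Deligne--Mumford stack in terms of classes of objects in $\mathcal{P}$, using the scissors relations \eqref{eq-scissor}. First I would show surjectivity: given any Deligne--Mumford stack $X$, choose a stratification $X = \bigsqcup_i S_i$ into locally closed substacks $S_i$ lying in $\mathcal{P}$. Iterating the scissors relation gives $\{X\} = \sum_i \{S_i\}$ in $\K_0(\DM_k)$, so $\{X\}$ is in the image of \eqref{eq-comparison}. For injectivity, I would define a map $\K_0(\DM_k) \to \K_0(\DM_k^\mathcal{P})$ by sending $\{X\}$ to $\sum_i \{S_i\}$ for a chosen stratification, and then check that this is well defined (independent of the stratification) and respects the scissors relations. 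The standard way to see independence is to show any two stratifications admit a common refinement, together with the observation that refining a stratum $S$ into $S \setminus T$ and $T$ (with $T \subset S$ closed, both in $\mathcal{P}$) does not change the sum, precisely because of the scissors relations imposed in $\K_0(\DM_k^\mathcal{P})$. One then checks this map is a two-sided inverse to \eqref{eq-comparison}.

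The main obstacle I expect is the well-definedness of the inverse map, specifically showing that the value $\sum_i \{S_i\}$ is independent of the chosen stratification. The subtle point is that a common refinement of two stratifications may produce strata that are intersections of the original strata, and one must verify that these intersections (or the pieces needed to compare the two sums) again admit stratifications into objects of $\mathcal{P}$ and that the scissors relations in $\K_0(\DM_k^\mathcal{P})$ suffice to identify the two expressions. This is a bookkeeping argument that hinges on $\mathcal{P}$ being stable under the operations used in forming refinements, or else on re-stratifying the refinement pieces; care is needed since $\mathcal{P}$ is not assumed closed under any particular operation.

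For the second part, I would verify the stratification hypothesis for each class. The key tool is that a Deligne--Mumford stack generically looks nice, so one stratifies by peeling off a dense open substack of the desired type and inducting on dimension. Concretely, for \eqref{it-smooth} one uses generic smoothness together with Noetherian induction; for \eqref{it-finite-inertia} one stratifies by the locus where the inertia is finite, which is open; for \eqref{it-separated} similarly via the locus where the diagonal is proper; for \eqref{it-global} and \eqref{it-quasi-projective} one invokes the structure results cited in Remark~\ref{rem-global-quotient}, namely that a dense open of any Deligne--Mumford stack is a global quotient (respectively quasi-projective) by Kresch and Edidin--Hassett--Kresch--Vistoli, combined with Noetherian induction on the complement. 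The cases \eqref{it-gerbe}, \eqref{it-affine}, and \eqref{it-global-finite} follow from the local structure of Deligne--Mumford stacks: étale-locally such a stack is a quotient of an affine scheme by a finite group, and over a suitable stratification of the coarse space the gerbe and affineness conditions can be arranged. In each case the point is to exhibit a dense open substack in $\mathcal{P}$ and then stratify the closed complement by Noetherian induction.
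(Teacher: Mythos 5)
Your handling of the general statement is essentially the paper's (which records it as an easy exercise): define $\K_0(\DM_k)\to\K_0(\DM_k^{\mathcal{P}})$ by $\{X\}\mapsto\sum_i\{X_i\}$ for a chosen stratification and check it is a well-defined two-sided inverse. The real divergence, and the real problems, lie in the verification of the stratification hypothesis for the eight listed classes. The paper does not treat the classes one by one. After reducing to $X$ reduced and invoking noetherian induction, it establishes \emph{all eight properties simultaneously} on a dense open by a single chain of reductions: $X$ is generically a gerbe by \cite[\sptag{06RC}]{stacks-project}, giving \ref{it-gerbe}; a gerbe has a coarse space, which one may shrink to an affine scheme, giving \ref{it-affine}; reducedness makes $X_\cs$ generically smooth, and a gerbe is smooth over its coarse space, giving \ref{it-smooth}; then comes the key step: choose a finite étale cover $U \to X$ of constant degree $n$ trivializing the gerbe, and take the $n$-fold fibre product of $U$ over $X$ with the big diagonal removed --- this is an $S_n$-torsor over $X$, exhibiting $X$ as a global quotient by the finite constant group $S_n$, giving \ref{it-global-finite}. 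The remaining properties are then formal: \ref{it-global-finite} implies \ref{it-finite-inertia} and \ref{it-global}; \ref{it-affine} and \ref{it-finite-inertia} imply \ref{it-separated}; and \ref{it-affine} and \ref{it-global} imply \ref{it-quasi-projective} by \cite[Proposition~5.1]{kresch2009}.

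Your case-by-case plan has genuine gaps exactly where this chain does the work. For \ref{it-global-finite} you appeal to the étale-local structure of Deligne--Mumford stacks, but being étale-locally a quotient of an affine by a finite group does not globalize to being a quotient by a single constant group on a dense open; the $S_n$-torsor construction above is the missing idea, and it is the heart of the proposition (it is also what Proposition~\ref{pr-eq-pres} later relies on). For \ref{it-separated}, ``the locus where the diagonal is proper'' is not an open substack of $X$ in any evident sense --- properness is not a pointwise condition cutting out an open locus in $X$; the paper instead \emph{deduces} separatedness from affine coarse space plus finite inertia. For \ref{it-finite-inertia}, openness of the finite-inertia locus is not obvious for a merely finitely presented (possibly non-separated) stack, and in any case density still needs the generic structure theory. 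For \ref{it-global}, the Edidin--Hassett--Kresch--Vistoli theorem you cite concerns orbifolds (trivial generic stabilizers), so it says nothing about the gerbe pieces that a general Deligne--Mumford stack generically is; and Kresch's theorem requires smooth, separated, and quasi-projective coarse space simultaneously, so your separate verifications are interdependent in a way the sketch does not acknowledge. Finally, for \ref{it-gerbe} and \ref{it-affine}, ``a suitable stratification of the coarse space'' presupposes a coarse space, which a general Deligne--Mumford stack of finite presentation need not have; one must first pass to the generic gerbe (or to finite inertia). So the outer shape of your argument (dense open in $\mathcal{P}$ plus noetherian induction) agrees with the paper's, but the individual generic statements are either unproved, circular, or false as stated without the gerbe-and-torsor argument.
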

\begin{proof}
It is an easy exercise to verify that the map
$$
\K_0(\DM_k) \to \K_0(\DM^\mathcal{P}_k), \qquad \{X\} \mapsto \sum_i \{X_i\},
$$
where $X$ is a Deligne--Mumford stack and $X_i$ is
a stratification of $X$ into objects in $\mathcal{P}$,
is a well defined inverse to the homomorphism~\eqref{eq-comparison}.

Next we prove the other part of the statement.
Let $X$ be a Deligne--Mumford stack.
Recall that we assume that $X$ is of finite presentation over a field $k$.
In particular, the stack $X$ is quasi-compact and quasi-separated.
It is harmless to assume that $X$ is reduced.
By noetherian induction,
it suffices to show that $X$ generically satisfies all the listed statement.
By \cite[\sptag{06RC}]{stacks-project},
we may assume that $X$ is a gerbe,
so \ref{it-gerbe} is satisfied.
By definition of gerbe, $X$ has a coarse space~$X_\cs$.
Under our assumptions, $X_\cs$ is quasi-separated algebraic space.
Hence $X_\cs$ has a non-empty schematic locus by \cite[\sptag{06NH}]{stacks-project}.
After base change of $X$ to an open affine in $X_\cs$,
we may assume that \ref{it-affine} holds.
Since $X$ is reduced, the same holds for $X_\cs$.
Hence $X_\cs$ is generically smooth.
Since the canonical morphism form a gerbe to its coarse space is smooth,
we may therefore assume that \ref{it-smooth} holds.
Now let $\pi\colon U \to X_\cs$ be an étale trivializing cover for the gerbe $X$.
By shrinking $U$ and $X_\cs$ if necessary,
we may assume that $\pi$ is finite by \cite[\sptag{06NH}]{stacks-project}.
By construction, $\pi$ lifts to a finite étale covering $U \to X$,
which we may assume to have constant degree $n$.
By taking the $n$-fold fiber product of $U$ over $X$ and removing the
big diagonal, we obtain a torsor for the symmetric group $S_n$ which
covers $X$.
This shows that \ref{it-global-finite} holds.

Now it is straightforward to verify that the rest of the properties hold.
Indeed, \ref{it-global-finite} implies \ref{it-finite-inertia} and \ref{it-global}.
Furthermore \ref{it-affine} and \ref{it-finite-inertia} imply \ref{it-separated}.
Finally \ref{it-affine} and \ref{it-global} imply~\ref{it-quasi-projective}
by \cite[Proposition~5.1]{kresch2009}. 
\end{proof}

The Grothendieck group of Deligne--Mumford stacks also admits a rather down to earth 
presentation in terms of equivariant varieties.
Let $R$ be the group generated by isomorphism classes $\{X\}^G$,
where $G$ is a finite, constant group and $X$ is variety on which $G$ acts properly.
As usual, we impose the \emph{scissors relations}
\begin{equation}
\label{eq-eq-scissor}
\{X\}^G = \{X \setminus Z\}^G + \{Z\}^G
\end{equation}
where $Z \subset X$ is a $G$-invariant closed subvariety.
We also impose the \emph{change of group relations}
\begin{equation}
\label{eq-eq-group-change}
\{X\}^{G\times H} = \{X/G\}^H,
\end{equation}
where $G$ and $H$ are finite, constant groups and $X$ is a variety endowed with a $G\times H$
action such that the restriction of the action to $G$ is free.
\begin{remark}
Note that the quotient $X/G$ in the relation \eqref{eq-eq-group-change} need not be a variety in the usual sense,
but rather an algebraic space.
When regarding Grothendieck groups,
this is harmless since any $H$-equivariant algebraic spaces admits an $H$-equivariant
stratification into $H$-equivariant varieties.
Indeed, this can be proven similarly as Proposition~\ref{pr-alt-def}.
\end{remark}

We endow the group $R$ with a ring structure by letting
\begin{equation}
\label{eq-eq-mult}
\{X\}^G\cdot \{Y\}^H := \{X\times Y\}^{G\times H}
\end{equation}
and extending by linearity.
It is easy to verify that this expression respects the relations \eqref{eq-eq-scissor} and \eqref{eq-eq-group-change}.

\begin{proposition}
\label{pr-eq-pres}
We have an isomorphism
$
R \xrightarrow{\sim} \K_0(\DM_k)
$
of rings, which is determined by
$
\{X\}^G \mapsto \{[X/G]\}
$
where $G$ is a finite, constant group acting on a variety $X$.
\end{proposition}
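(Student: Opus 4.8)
The plan is to construct an explicit inverse to the ring homomorphism $\Phi\colon R \to \K_0(\DM_k)$ given by $\{X\}^G \mapsto \{[X/G]\}$, after first checking that $\Phi$ itself is well defined. For the latter I would verify that the defining relations of $R$ map to valid identities in $\K_0(\DM_k)$. The scissors relation \eqref{eq-eq-scissor} causes no trouble, since a $G$-invariant closed subvariety $Z \subset X$ yields a closed substack $[Z/G] \subset [X/G]$ with open complement $[(X\setminus Z)/G]$, so that \eqref{eq-scissor} applies directly. For the change of group relation \eqref{eq-eq-group-change} I would use the canonical isomorphism $[X/(G\times H)] \cong [(X/G)/H]$, valid when $G$ acts freely; combined with the scissors relations in $\K_0(\DM_k)$ and the interpretation of $\{X/G\}^H$ through an $H$-equivariant stratification of the algebraic space $X/G$ (as in the preceding remark), both sides map to $\{[X/(G\times H)]\}$. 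Compatibility with the products \eqref{eq-eq-mult} and \eqref{eq-multiplication} is immediate from $[(X\times Y)/(G\times H)] \cong [X/G]\times[Y/H]$.

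To produce the inverse I first invoke Proposition~\ref{pr-alt-def} with $\mathcal{P}$ the class of Deligne--Mumford stacks that are simultaneously separated and global quotients by finite constant groups (properties \ref{it-separated} and \ref{it-global-finite}); this identifies $\K_0(\DM_k)$ with $\K_0(\DM_k^\mathcal{P})$. Every object of $\mathcal{P}$ admits a presentation $[Y/G]$ with $Y$ a variety and $G$ a finite constant group acting properly, and I define $\Psi\colon \K_0(\DM_k^\mathcal{P}) \to R$ on generators by $\{[Y/G]\} \mapsto \{Y\}^G$. A closed immersion $Z \subset [Y/G]$ of objects of $\mathcal{P}$ has the form $[Z_0/G]$ for a $G$-invariant closed subvariety $Z_0 \subset Y$, so $\Psi$ sends the scissors relation in $\K_0(\DM_k^\mathcal{P})$ to the scissors relation \eqref{eq-eq-scissor} in $R$, and compatibility with products follows exactly as for $\Phi$.

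The crux is to show that $\Psi$ is independent of the chosen presentation, which is precisely what forces the change of group relations into $R$. Given two presentations $[Y/G] \cong [Y'/G']$, I would form the fiber product $W := Y \times_{[Y/G]} Y'$. Since the atlas maps $Y \to [Y/G]$ and $Y' \to [Y'/G']$ are torsors under $G$ and $G'$ respectively, the projection $W \to Y$ is a $G'$-torsor and $W \to Y'$ is a $G$-torsor; in particular $W$ is again a variety, finite étale over each of $Y$ and $Y'$, carrying commuting free actions of $G$ and $G'$ with $W/G' \cong Y$ and $W/G \cong Y'$. Applying \eqref{eq-eq-group-change} twice then gives
\begin{equation*}
\{Y\}^G = \{W/G'\}^{G} = \{W\}^{G'\times G} = \{W\}^{G\times G'} = \{W/G\}^{G'} = \{Y'\}^{G'},
\end{equation*}
so that $\Psi$ is well defined.

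Finally, $\Psi$ and $\Phi$ (the latter transported along the isomorphism of Proposition~\ref{pr-alt-def}) are mutually inverse, since on generators $\Psi\Phi(\{X\}^G) = \{X\}^G$ and $\Phi\Psi(\{[Y/G]\}) = \{[Y/G]\}$, which completes the proof. I expect the main obstacle to be exactly this presentation-independence: the entire difficulty is concentrated in verifying that the common refinement $W$ is a variety equipped with the required commuting free actions, so that the change of group relation \eqref{eq-eq-group-change} may legitimately be applied in both directions.
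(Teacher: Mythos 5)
Your proposal is correct and takes essentially the same route as the paper: well-definedness of the forward map via the same quotient-stack isomorphisms, reduction to quotients by finite constant groups via Proposition~\ref{pr-alt-def}, and presentation-independence of the inverse established through the fiber product $W = Y \times_{[Y/G]} Y'$, which is precisely the bitorsor the paper uses for its double application of the change-of-group relation \eqref{eq-eq-group-change}. The only cosmetic difference is that you build separatedness into the class $\mathcal{P}$ to guarantee properness of the actions, a point the paper leaves implicit.
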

\begin{proof}
First we verify that the association $\{X\}^G \mapsto \{[X/G]\}$ gives a well defined ring homomorphism.
The relation \eqref{eq-eq-scissor} is satisfied by the usual scissors relations for $\K_0(\DM_k)$
since $[Z/G] \subset [X/G]$ is a closed substack with complement $[(X\setminus Z)/G]$.
The relations \eqref{eq-eq-group-change} and \eqref{eq-eq-mult} are satisfied since we
have isomorphisms $[X/(G\times H)] \cong [(X/G)/H]$ and $[(X \times Y)/(G\times H)] \cong [X/G]\times [Y/H]$
of algebraic stacks.
Thus we get an induced ring homomorphism $R \to \K_0(\DM_k)$
which we denote by~$\varphi$.

Let us construct an inverse $\varphi^{-1}$ to this map.
By Proposition~\ref{pr-alt-def}~\ref{it-global-finite},
it is enough to define $\varphi^{-1}$ on stacks which are global quotients by finite, constant groups.
The association $\varphi^{-1}\colon \{[X/G]\} \mapsto \{X\}^G$ clearly respects the scissors
relations.
If $[X/G] \cong [Y/H]$,
we may form the 2-cartesian square
$$
\xymatrix{
Z \ar[r]\ar[d] & X \ar[d] \\
Y \ar[r] & [X/G] \\
}
$$
where the morphisms from $X$ and $Y$ are the coverings induced by the $G$- and $H$-actions, respectively.
It follows that $Z$ is a $G\times H$-torsor over $[X/G]$, a $G$-torsor over $Y$ and an $H$-torsor over $X$.
Hence, by applying the change of groups relation~\eqref{eq-eq-group-change} twice, we get
$$
\{X\}^G = \{Z\}^{G \times H} = \{Y\}^H,
$$
so $\varphi^{-1}$ is well defined.
\end{proof}

\section{The weak factorization theorem}
\label{weak}
In this section, we develop weak factorization for Deligne--Mumford stacks that are global quotients.
\begin{definition}
\label{def-stacky-blow-up}
Let $X$ be an algebraic stack, let $Z \subset X$ be a closed substack,
and let $r$ be a positive integer.
A \define{stacky blow-up of $X$ with center $Z$ and weight $r$}
is a morphism $\pi\colon \widetilde{X} \to X$ composed by a usual blow-up of $X$
in $Z$ followed by an $r$-th root construction in the exceptional divisor.
The \emph{exceptional divisor} $E$ of a stacky blow-up is the universal $r$-th
root of the divisor $\pi^{-1}(Z)$.
Given a simple normal crossings divisor $D \subset X$,
the \define{total transform} of $D$ to $\widetilde{X}$ is $\widetilde{D} + E$
where $\widetilde{D}$ is the birational transform of $D$ to $\widetilde{X}$.
Note that by contrast to the situation with ordinary blow-ups,
the total transform is not the pull-back of $D$ to $\widetilde{X}$.
\end{definition}

The key property of stacky blow-ups which we need here is that a stacky blow-up of
a smooth algebraic stack $X$ in a smooth center $Z \subset X$ is smooth,
and that the total transform of any simple normal crossings divisor $D \subset X$
having normal crossings with $Z$ is a simple normal crossings divisor in~$\widetilde{X}$.

We make the following \emph{ad hoc} definitions in order to encapsulate our standard assumptions
on Deligne--Mumford stacks endowed with simple normal crossings divisors in a convenient way.
\begin{definition}
\label{def-stacky-standard}
Let $S$ be an algebraic stack.
Recall that we assume that all algebraic stacks are of finite presentation over a field $\kk$ of characteristic zero.
\begin{enumerate}
\item
A \define{standard pair} $(X, D)$ over $S$ is a smooth, separated morphism $X \to S$
of algebraic stacks, which is Deligne--Mumford in the relative sense (see~\cite[\sptag{04YW}]{stacks-project}),
together with an effective Cartier divisor $D \subset X$
which has simple normal crossings relative~$S$.
That is, the divisor $D$ is a sum of effective Cartier divisors which are smooth over $S$,
and $D$ is a simple normal crossings divisor over each fiber of $X \to S$.
\item
\label{it-birational-map}
A \define{birational map} $\varphi \colon (Y, E) \dashrightarrow (X, D)$ of standard pairs over $S$
is a birational map $Y \dashrightarrow X$ inducing an isomorphism $Y\setminus E \to X \setminus D$ over $S$.
Such a map is \define{proper} if the normalization of $X \times_S Y$ in the diagonal
$U \cong X\setminus D \cong Y \setminus E \to X\times_S Y$ is proper over both
$X$ and $Y$.
If $\varphi$ is everywhere defined over $S$,
then we say that $\varphi$ is a \define{birational morphism over $S$}.
Sometimes, we indicate this with a solid arrow.
\item
\label{it-stacky-blow-up}
A birational map as in \ref{it-birational-map} is an \define{admissible stacky blow-up} if there is
a locus $Z \subset D \subset X$,
having normal crossings with $D$ relative $S$,
and a weight $r \geq 1$
such that $Y$ is the stacky blow up of $X$ in $Z$ with weight $r$,
and $E$ is the total transform of $D$.
In this situation, we also say that $\varphi^{-1}$ is an \emph{admissible stacky blow-down}.
\item
An admissible stacky blow-up $\varphi$ as in \ref{it-stacky-blow-up} is simply called
an \emph{admissible blow-up} if the weight $r = 1$.
\item
An admissible stacky blow-up $\varphi$ as in \ref{it-stacky-blow-up} is simple called
an \emph{admissible root construction} if $Z$ is a smooth divisor.
\end{enumerate}
\end{definition}

\begin{remark}
For our purposes, we only need to consider the cases when $S$ is either~$\Spec \kk$
or a classifying stack $\BB G$ for some linear algebraic group $G$.
In fact, it is enough to consider $G = \GL_n$.
For a smooth algebraic stack $X$ over $\BB G$,
it is easy to see that a divisor $D \subset X$ is a simple normal crossings divisor
over $\BB G$ if and only if it is a simple normal crossings divisor over $\Spec \kk$.
\end{remark}

First we recall the equivariant weak factorization theorem by
Abramovich et~al. \cite{abramovich2002}.
Here is a formulation, in the language of algebraic stacks, of a version of the theorem that suffices for our needs.
\begin{theorem}[Equivariant weak factorization]
\label{th-equiv-wf}
Let $G$ be an algebraic group,
and let $\varphi\colon (Y, E) \to (X, D)$ be a proper birational morphism of standard pairs over the
classifying stack~$\BB G$.
Assume that both $X$ and $Y$ are representable over $\BB G$.
Then $\varphi$ factors as a sequence
$$
(Y, E) := (X_n, D_n) \xdashrightarrow{\varphi_n} \cdots \xdashrightarrow{\varphi_1} (X_0, D_0) =: (X, D)
$$
of proper birational maps $\varphi_i$ over $\BB G$,
where for each $i$ either $\varphi_i$ or $\varphi_i^{-1}$ is an admissible blow-up.
Each of the maps $X_i \dashrightarrow X$ is everywhere defined.
Moreover, the morphism $X_i \dashrightarrow X$ is projective if the same holds for $\varphi$.
\end{theorem}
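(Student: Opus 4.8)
The plan is to reduce Theorem~\ref{th-equiv-wf} entirely to the equivariant weak factorization theorem of Abramovich et al.\ \cite{abramovich2002} via the quotient-stack dictionary, so that essentially all of the geometry is carried out by the cited result. Since $X$ and $Y$ are representable over $\BB G$, the fibre products $W_X := X \times_{\BB G}\Spec\kk$ and $W_Y := Y \times_{\BB G}\Spec\kk$ are smooth, separated algebraic spaces with $G$-actions, and $X \cong [W_X/G]$, $Y \cong [W_Y/G]$. A standard pair $(X,D)$ over $\BB G$ then amounts to such a $W_X$ together with a $G$-invariant divisor $D_{W_X}\subset W_X$ that is simple normal crossings over $\kk$ --- by the remark preceding the theorem, being SNC relative $\BB G$ is equivalent to being SNC over $\kk$ --- and the proper birational morphism $\varphi$ corresponds to a proper $G$-equivariant birational morphism $f\colon W_Y \to W_X$ that is an isomorphism over the $G$-invariant open locus $U_W := W_X\setminus D_{W_X} \cong W_Y \setminus D_{W_Y}$.

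The key step is to match blow-ups on the two sides. Since $W \to [W/G]$ is a $G$-torsor, hence faithfully flat, and blow-ups commute with flat base change, an admissible blow-up of $[W/G]$ in a center $[Z/G]$ is exactly the quotient $[\operatorname{Bl}_Z W/G]$ of the equivariant blow-up in the smooth $G$-invariant center $Z$; under this identification the conditions of Definition~\ref{def-stacky-standard} that $Z \subset D$ have normal crossings with $D$, and that $E$ be the total transform of $D$, translate verbatim into the corresponding conditions for $D_W$. Applying the equivariant weak factorization theorem to $f$ therefore produces a factorization
\[
W_Y = W_n \dashrightarrow \cdots \dashrightarrow W_0 = W_X
\]
into proper $G$-equivariant maps, each step or its inverse being the blow-up of a smooth $G$-invariant center contained in, and having normal crossings with, the evolving SNC boundary, all isomorphisms over $U_W$; the cited theorem moreover yields an index $i_0$ such that $W_i \to W_X$ is a projective morphism for $i \le i_0$ and $W_i \to W_Y$ is a projective morphism for $i \ge i_0$. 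Because $f$ itself is a morphism, composing $W_i \to W_Y$ with $f$ for $i \ge i_0$ shows that $W_i \to W_X$ is an everywhere-defined morphism for every $i$, and projective whenever $f$ is. Passing to the quotient stacks $X_i := [W_i/G]$ and $D_i := [D_{W_i}/G]$ recovers the desired factorization of $\varphi$ into admissible blow-ups and blow-downs over $\BB G$, with the everywhere-definedness and projectivity of $X_i \dashrightarrow X$ descending along the torsor $W_X \to X$.

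The argument is thus mostly bookkeeping, and I expect the real obstacle to be twofold. The routine part is checking that the boundary-tracking conclusions of \cite{abramovich2002} --- the isomorphism locus, the disjointness of centers from it, their transversality to the successive SNC divisors, and the total-transform recipe --- correspond exactly to the notion of admissible blow-up of Definition~\ref{def-stacky-standard}. The genuinely delicate point is that the equivariant weak factorization of \cite{abramovich2002} is formulated for $G$-varieties, whereas $W_X$ and $W_Y$ are here only algebraic spaces; the crux is to invoke that theorem in the $G$-equivariant algebraic-space setting, which ought to be possible because the underlying torification and functorial principalization are étale-local, but which must be justified with care.
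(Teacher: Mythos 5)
Your overall route is the same as the paper's: identify $X \cong [X'/G]$ and $Y \cong [Y'/G]$ with $X' = X \times_{\BB G} \Spec\kk$, $Y' = Y \times_{\BB G}\Spec\kk$ smooth algebraic spaces with $G$-action, lift $\varphi$ to a $G$-equivariant proper birational morphism, invoke the weak factorization theorem of Abramovich--Karu--Matsuki--W{\l}odarczyk together with its equivariance remark, and descend the factorization back along the torsor to $\BB G$ (the blow-up dictionary via flat descent is also the same). However, there is a genuine gap at precisely the point your proposal passes over in one sentence, and it is the point to which the paper devotes essentially its entire proof. The equivariance statement in \cite{abramovich2002} (Remark~2 after Theorem~0.3.1) concerns actions of \emph{abstract} (discrete) groups: the factorization is functorial under automorphisms, hence can be chosen equivariantly for a group acting by automorphisms. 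Applied to an algebraic group $G$ this yields only a factorization equivariant under the abstract group $G(\kk)$, with centers invariant under $G(\kk)$. But in order to form the quotient stacks $[X_i'/G]$ --- which is what ``passing to the quotient stacks'' requires to produce the $X_i$ over $\BB G$ --- each intermediate space must carry an action morphism $\alpha_i\colon G \times X_i' \to X_i'$ of algebraic groups, not merely an action of the group of $\kk$-points. The paper establishes this by induction along the factorization, in two cases: if $\varphi_{i+1}'$ is a blow-up, its center is scheme-theoretically $G$-invariant (this can be checked on $\kk$-points), so $\alpha_i$ lifts canonically; if $(\varphi_{i+1}')^{-1}$ is a blow-up, one shows that $(\varphi_{i+1}')^{-1}\circ\alpha_i$ factors through $\gamma = \id\times(\varphi_{i+1}')^{-1}$ using \cite[Lemma~8.11.1]{egaII}, since $\gamma$ is closed and $\gamma_\ast\mathcal{O}_{G\times X_i'} \cong \mathcal{O}_{G\times X_{i+1}'}$. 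Without this step the descent to $\BB G$, and hence the theorem, does not follow.

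Conversely, the issue you single out as the ``genuinely delicate point'' --- that the cited theorem is formulated for $G$-varieties rather than algebraic spaces --- is not actually an obstacle: Theorem~0.3.1 of \cite{abramovich2002} is stated for proper birational maps of complete nonsingular \emph{algebraic spaces}, and that is the statement the paper cites. The real hypothesis mismatch there is completeness: standard pairs over $\BB G$ in the sense of Definition~\ref{def-stacky-standard} are only separated and of finite type, not proper. Your proposal does not address this; the paper does, by observing that properness is used in \cite{abramovich2002} only to reduce to the case where the birational map is a proper birational morphism, which is exactly the given situation. Finally, a small but relevant precision: the correct equivariant notion of boundary is a divisor which is a \emph{sum of $G$-invariant smooth divisors} (this is what a standard pair over $\BB G$ pulls back to, and what is preserved at every step since the centers are smooth, $G$-invariant and have normal crossings with the boundary); your formulation ``a $G$-invariant divisor that is simple normal crossings over $\kk$'' is weaker and would not suffice to descend the intermediate boundaries $D_i$ to divisors on $X_i$ over $\BB G$.
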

\begin{proof}
The representable morphisms to $\BB G$ from $X$ and $Y$ tells us that $X \cong [X'/G]$ and $Y \cong [Y'/G]$
where $X'$ and $Y'$ are smooth algebraic spaces with $G$-actions.
Since the morphism $\varphi$ is defined over $\BB G$,
it lifts to a $G$-equivariant proper morphism $\varphi'\colon X' \to Y'$.
Hence the theorem follows from Theorem~0.3.1 together with the remark about $G$-equivariance
(Remark~2 below the formulation of the theorem) from \cite{abramovich2002}.
There are a few subtle points when verifying that the theorem applies. 
\begin{enumerate}
\item
Remark 2 from \cite[Section~0.3]{abramovich2002} also applies
to algebraic actions by algebraic groups,
and not only for actions by discrete groups.
We give a short sketch to motivate this claim.
For simplicity, we assume that $k$ is algebraically closed,
and that $X'$ and $Y'$ are schemes,
leaving the general case to the reader.
By Theorem~0.3.1 we have a $G(k)$-equivariant weak factorization
$$
X'_n \xdashrightarrow{\varphi'_n} \cdots \xdashrightarrow{\varphi'_1} X'_0.
$$
We need to verify that each action morphism $G(k)\times X'_i \to X'_i$ is induced by
a morphism $\alpha_i \colon G \times X'_i \to X'_i$.
Assume, by induction, that we have constructed $\alpha_i$.
We need to treat two cases.

\emph{Case 1:} $\varphi'_{i+1}$ is a blow-up.
The center of the blow-up is $G$-invariant,
as this can be verified on $k$-points.
Hence the action morphisms $\alpha_i$ lifts canonically to an action morphism
$\alpha_{i+1}$.

\emph{Case 2:} $(\varphi'_{i+1})^{-1}$ is a blow-up.
Consider the diagram
$$
\xymatrix{
G \times X'_i \ar[r]^{\alpha_i}\ar[d]_{\gamma} & X'_i \ar[d]^{(\varphi'_{i+1})^{-1}}\\
G \times X'_{i + 1} \ar@{-->}[r]_{\alpha_{i+1}} & X'_{i + 1},
}
$$
where $\gamma = \id\times(\varphi'_{i+1})^{-1}$.
Since $(\varphi'_{i+1})^{-1}$ is $G(k)$-equivariant,
it follows that the composition $(\varphi'_{i+1})^{-1}\circ \alpha_i$ is constant on the fibers
of $\gamma$.
Note that $\gamma$ is a blow-up of a smooth variety in a smooth center.
In particular, the morphism $\gamma$ is closed and
$\varphi_\ast(\mathcal{O}_{G \times X'_i}) \cong \mathcal{O}_{G\times X'_{i+1}}$.
Hence \cite[Lemma~8.11.1]{egaII} applies,
so $(\varphi'_{i+1})^{-1}\circ \alpha_i$ factors through $\gamma$,
and it follows that $\alpha_{i+1}$ is a morphism.
\item
In the statement of \cite[Theorem~0.3.1]{abramovich2002},
the algebraic spaces $X'$ and $Y'$ are assumed to be proper,
which need not be the case in our situation.
However, the only way the properness is used in the proof is to reduce to the case when
the birational map is given by a proper birational morphism.
\item
In our setting, a \emph{$G$-invariant simple normal crossings divisor}
is a normal crossings divisor which is a sum of $G$-invariant smooth divisors.
It is easy to see that if the exceptional locus of $X'$ and $Y'$ has this property,
then the same is true for all intermediate steps.
Indeed, this follows from the fact that each center is smooth,
$G$-invariant and has simple normal crossings with the exceptional locus.
\end{enumerate}
\end{proof}

\begin{theorem}[Relative destackification]
\label{th-rel-destack}
Let $(X, D)$ be a standard pair over an algebraic stack $S$
such that the restriction of the structure map $X \to S$ to $X\setminus D$ is representable.
Then there exists a sequence
\begin{equation}
\label{eq-destack-sequence}
(X_n, D_n) \xrightarrow{\varphi_n} \cdots \xrightarrow{\varphi_1} (X_0, D_0) =: (X, D)
\end{equation}
of admissible stacky blow-ups over $S$ such that the relative coarse space $\underline{X}_n$ of $X_n$
over $S$ is smooth over~$S$.

Moreover, the relative coarse space $\underline{D}_n$ of $D_n$ over $S$ is a simple normal crossings divisor,
and the canonical birational morphism $(X_n, D_n) \to (\underline{X}_n, \underline{D}_n)$
factors as a sequence of admissible root constructions. 
\end{theorem}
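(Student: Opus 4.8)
The plan is to obtain this relative statement as a consequence of the functorial destackification theorem of \cite{bergh2017}. The hypothesis that the restriction $X \setminus D \to S$ is representable says precisely that the relative stacky locus of $X \to S$ is contained in $D$; thus $(X, D)$ is of exactly the shape to which destackification applies, with $D$ serving as the distinguished simple normal crossings divisor along which the stacky structure is resolved. The upshot of destackification is a sequence of admissible stacky blow-ups after which the relative coarse space becomes smooth over $S$ and the modified stack is an iterated root stack over that coarse space along the transformed divisor. The two assertions of the theorem are then a readout of this output: the first is the smoothness of $\underline{X}_n \to S$, and the moreover part is the root-stack structure of $X_n \to \underline{X}_n$. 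Since each admissible stacky blow-up has center $Z \subset D$, the modification is an isomorphism away from $D$, so $X_n \setminus D_n \to S$ remains representable and there is no residual relative gerbe; hence the factorization of $X_n \to \underline{X}_n$ is a pure sequence of admissible root constructions, as claimed.

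What must be supplied is that destackification can be carried out relative to the arbitrary algebraic stack $S$, and here I would exploit the functoriality of the construction of \cite{bergh2017} with respect to smooth morphisms. First I would choose a smooth presentation $S' \to S$ with $S'$ a scheme and pull back $(X, D)$ to a standard pair over $S'$. Because the destackification sequence is canonical and compatible with smooth pullback, the sequences obtained by the two projections $S' \times_S S' \to S'$ both agree with the sequence associated to $(X, D)$ over $S' \times_S S'$; this cocycle condition furnishes a descent datum, so the sequence descends along the smooth cover and it suffices to treat the case in which $S$ is a scheme. The formation of the relative coarse space commutes with the flat base change $S' \to S$, and smoothness of $\underline{X}_n \to S$ may be checked after this base change, so the descent is compatible with both conclusions of the theorem.

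The main obstacle is to guarantee that the resulting sequence \eqref{eq-destack-sequence} consists of blow-ups that are \emph{admissible in the relative sense} of Definition~\ref{def-stacky-standard}: each center must lie in the divisor, must have normal crossings with it relative to $S$, and the divisors $D_i$ must be tracked as total transforms rather than pullbacks. This is exactly what functoriality for smooth morphisms secures, since the canonical centers produced by the algorithm are automatically vertical over $S$—for instance $G$-invariant in the case $S = \BB G$—without imposing any smoothness hypothesis on $S$ itself, because the algorithm only resolves the stacky structure along the fibres and never modifies $S$. Checking that the simple normal crossings bookkeeping under total transforms matches the output of \cite{bergh2017} is then routine, given the key property of stacky blow-ups recorded immediately after Definition~\ref{def-stacky-blow-up}.
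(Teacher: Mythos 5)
Your skeleton matches the paper's proof: the representable case is \cite[Theorem~1.2]{bergh2017}, and the general case follows by descending the destackification sequence along a smooth presentation $S' \to S$ using the functoriality of the algorithm, with smoothness of $\underline{X}_n \to S$ and the root-construction structure checked after flat base change. But there is a genuine gap at the point where you assert that representability of $X\setminus D \to S$ makes $(X,D)$ ``of exactly the shape to which destackification applies.'' The theorem you are citing is about tame stacks with \emph{abelian} stabilizers --- this is part of the standing hypotheses of \cite{bergh2017} --- whereas Definition~\ref{def-stacky-standard} of the present paper imposes no such condition: it only asks that $X \to S$ be smooth, separated and relatively Deligne--Mumford, with $D$ a relative simple normal crossings divisor. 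Representability of $X \setminus D \to S$ gives you the containment of the (relative) stacky locus in $D$, but it does not by itself give abelian stabilizers, and without that hypothesis \cite[Theorem~1.2]{bergh2017} cannot be invoked.

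The paper closes this gap with a substantive input that your argument nowhere supplies: because the stacky locus of the pulled-back pair $(X'_0, D'_0)$ over the scheme $S'$ is contained in a simple normal crossings divisor, its stabilizers are automatically abelian --- by \cite[Theorem~4.1]{ry2000} for global quotient stacks, with the general case reduced to that one by working étale-locally on the coarse space. This is a genuine theorem (Reichstein--Youssin), not routine bookkeeping, so it must appear as an explicit step. Two smaller inaccuracies: the functoriality of destackification concerns divisors equipped with an ordered decomposition into smooth components, and its root constructions may involve several components simultaneously, so translating the output into a sequence of admissible root constructions in the sense of Definition~\ref{def-stacky-standard} uses that ordering; and the descent is most safely performed recursively --- descend each center $Z'_i$ as a closed substack of $X_i$, check smoothness and normal crossings locally, then form the stacky blow-up over $S$ --- rather than descending ``the sequence'' wholesale. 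These last two points are repairable along the lines you indicate, but the abelian-stabilizer verification is a missing step that must be added.
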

\begin{proof}
If $S$ is representable, then this is a special case of~\cite[Theorem~1.2]{bergh2017}.
Since the destackification algorithm is functorial with respect to arbitrary change of base~$S$,
we obtain the relative version by a descent argument,
which we work out in detail below.
Before doing this, we would like to stress two subtle points about the functoriality.
\begin{enumerate}
\item
\label{it-components}
The destackification algorithm works with simple normal crossings divisors together
with a decomposition of the divisor into a sequence of smooth
(but not necessarily irreducible)
components (see~\cite[Definition~2.1]{bergh2017}).
Moreover, the algorithm is functorial with respect to this data.
\item
\label{it-roots}
The formulation of the definition of root construction in \cite[Section~2.2]{bergh2017}
allows roots to be taken along several components of the
simple normal crossings divisor simultaneously.
But since the components have a given ordering,
there is a canonical way to translate this into a sequence of root constructions in the
sense of Definition~\ref{def-stacky-standard}. 
\end{enumerate}

Let $S' \to S$ be a smooth surjective morphism with $S'$ a scheme.
Also let $S'' \rightrightarrows S'$ denote the groupoid formed by the
fibre product $S'' := S'\times_S S'$ together with the relevant maps.
Now consider the pull-backs $X''_0 \rightrightarrows X'_0$ and
$D''_0 \rightrightarrows D'_0$ of this groupoid to $X_0$ and $D_0$ respectively.
Then $(X'_0, D'_0)$ is a standard pair with $X'_0 \setminus D'_0$ representable.
Since the stacky locus of $X'_0$ is contained in a simple normal crossings divisor,
the stabilizers are abelian.
Indeed, this is true for global quotient stacks by \cite[Theorem~4.1]{ry2000},
and in general one can reduce to this case by working étale locally on the coarse space
(a more detailed account on this will be given in a forthcoming article jointly with Rydh).
The same holds for the standard pair~$(X''_0, D''_0)$.

Next we apply destackification to both $(X'_0, D'_0)$ and $(X''_0, D''_0)$.
Since the destackification algorithm is functorial with respect to arbitrary base change,
the sequence obtained by the destackification of $(X''_0, D''_0)$ is
isomorphic to the pull-back along any of the two projections $X''_0 \rightrightarrows X'_0$
of the sequence obtained by destackification of $(X'_0, D'_0)$.

Now we construct the sequence~\eqref{eq-destack-sequence} recursively.
At each step~$i$, with $n > i \geq 0$, we have a solid diagram
$$
\xymatrix{
X''_{i + 1} \ar[r]_{\varphi''_{i+1}} \ar@<-.8ex>[d] \ar@<.8ex>[d]
& X''_i \ar@<-.8ex>[d] \ar@<.8ex>[d]
& Z''_i \ar[l] \ar@<-.8ex>[d] \ar@<.8ex>[d]\\
X'_{i+1} \ar[r]_{\varphi'_{i+1}}\ar@{.>}[d] & X'_i\ar[d] & Z'_i \ar[l]\ar@{.>}[d]\\
X_{i+1} \ar@{.>}[r]_{\varphi_{i+1}} & X_i & Z_i \ar@{.>}[l]\\
}
$$
where the squares are cartesian in the sense described above,
and $Z'_i$ and $Z''_i$ are the centers of the respective stacky blow-ups at step $i$.
The center $Z'_i$ descends to a closed substack of $Z_i \subset X_i$ producing
the lower right-hand cartesian square.
In particular $Z_i$ is smooth and has simple normal crossings with $D_i$,
since both these properties can be checked locally.
Taking the stacky blow-up with the given weight produces the
lower left-hand cartesian square.
We also get $D_{i+1}$ as the total transform of $D_i$.

Finally we let $X_n \to \underline{X}_n \to S$ be the canonical factorization through the
relative coarse space.
Base change to $S'$ gives us the diagram
$$
\xymatrix{
X'_n \ar[d]\ar[r] & \underline{X}'_n \ar[d]\ar[r] & S' \ar[d] \\
X_n \ar[r] & \ar[r] \underline{X}_n & S
}
$$
with cartesian squares.
Each component, in the sense of \ref{it-components} in the beginning of the proof, of $D_n'$
is invariant under the groupoid $\underline{X}''_n \rightrightarrows \underline{X}'_n$.
Hence $\underline{D}_n' \subset X_n'$ descends to a simple normal crossings
divisor~$\underline{D}_n \subset X_n$.
That $X_n \to \underline{X}_n$ is an iterated root construction in the components of $\underline{D}_n$
can be checked locally on $\underline{X}_n$,
which concludes the proof.
\end{proof}

\begin{remark}
From the proof of Theorem~\ref{th-rel-destack},
we see that it is unnecessary to put any conditions on the algebraic stack $S$
besides that $S$ should be quasi-compact
(and this is only needed to ensure that the destackification sequence is finite).
It is also clear that we have the same functoriality properties as in the original theorem.
In other words, Theorem~\ref{th-rel-destack} is a direct generalization of \cite[Theorem~1.2]{bergh2017}.
\end{remark}

\begin{theorem}[Weak factorization for Deligne--Mumford stacks]
\label{th-wf-dm}
Let $\varphi\colon (Y, E) \dashrightarrow (X, D)$
be a proper birational map of standard pairs over $\kk$.
Assume that both $X$ and $Y$ are global quotient stacks.
Then $\varphi$ factors as a sequence
\begin{equation}
\label{eq-wf}
(Y, E) := (X_n, D_n) \xdashrightarrow{\varphi_n} \cdots \xdashrightarrow{\varphi_1} (X_0, D_0) =: (X, D)
\end{equation}
of proper birational maps $\varphi_i$,
where for each $i$ either $\varphi_i$ or $\varphi^{-1}_i$ is an admissible stacky blow-up.
\end{theorem}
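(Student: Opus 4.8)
The plan is to reduce the entire problem to the coarse spaces, where weak factorization is already available, by using relative destackification to package all stacky information into root constructions. Concretely, I would first destackify $X$ and $Y$ over $\kk$ by Theorem~\ref{th-rel-destack}, then apply equivariant weak factorization (Theorem~\ref{th-equiv-wf}, with trivial group) to the resulting birational map of smooth coarse spaces, and finally splice the three factorizations together. Since each step of destackification and each root construction is an admissible stacky blow-up, and each step produced on the coarse spaces is an admissible blow-up of weight one, the concatenation is exactly a factorization of the desired form \eqref{eq-wf}.

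For the destackification step I would apply Theorem~\ref{th-rel-destack} with $S=\Spec\kk$ to each of $(X,D)$ and $(Y,E)$. For $X$ this yields a sequence of admissible stacky blow-ups $(X_N,D_N)\to\cdots\to(X,D)$ whose relative coarse space $\bar X:=\underline X_N$ is smooth over $\kk$, carries the simple normal crossings divisor $\bar D:=\underline D_N$, and is reached from $X_N$ by a sequence of admissible root constructions $X_N\to\bar X$; symmetrically I obtain $Y_M\to Y$, a smooth coarse space $\bar Y$ with boundary $\bar E$, and root constructions $Y_M\to\bar Y$. Here the global quotient hypothesis enters: together with finite inertia and the requirement (built into Theorem~\ref{th-rel-destack}) that $X\setminus D$ and $Y\setminus E$ be algebraic spaces, it supplies the abelian stabilizers needed to run destackification. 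Because every center occurring in destackification lies in the stacky locus, hence inside $D$ respectively $E$, all of these modifications restrict to isomorphisms over the common representable open $U\cong X\setminus D\cong Y\setminus E$. Thus $U$ is a dense open of both smooth coarse spaces, and $\varphi$ descends to a proper birational map $\bar\varphi\colon(\bar Y,\bar E)\dashrightarrow(\bar X,\bar D)$ of standard pairs over $\kk$ that is the identity on $U$.

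Next I would treat $\bar\varphi$ on the coarse spaces. As Theorem~\ref{th-equiv-wf} is stated for morphisms, I first resolve $\bar\varphi$: taking the closure of its graph in $\bar Y\times\bar X$ and applying functorial embedded resolution, I obtain a smooth algebraic space $\bar W$ with simple normal crossings divisor $F=\bar W\setminus U$ and proper birational morphisms of standard pairs $(\bar W,F)\to(\bar X,\bar D)$ and $(\bar W,F)\to(\bar Y,\bar E)$. Since $\bar X$, $\bar Y$ and $\bar W$ are algebraic spaces, they are representable over $\BB G$ for the trivial group $G$, so Theorem~\ref{th-equiv-wf} factors each of the two morphisms into admissible blow-ups. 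Reading the factorization of $(\bar W,F)\to(\bar Y,\bar E)$ backwards and appending the factorization of $(\bar W,F)\to(\bar X,\bar D)$ gives a factorization of $\bar\varphi$ into admissible blow-ups of weight one.

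Assembling everything, I obtain the zig-zag
\[
(Y,E)\leftarrow(Y_M,D_M)\to(\bar Y,\bar E)\dashleftarrow(\bar W,F)\dashrightarrow(\bar X,\bar D)\leftarrow(X_N,D_N)\to(X,D),
\]
in which the two outer segments are the destackifications, the next two are the root constructions, and the middle is the coarse weak factorization through $\bar W$. Every arrow is an admissible stacky blow-up or the inverse of one, so after relabelling this is precisely a factorization of $\varphi$ as in \eqref{eq-wf}. The genuine mathematical content is entirely carried by Theorems~\ref{th-rel-destack} and~\ref{th-equiv-wf}; I expect the main obstacle to be the bookkeeping around representability—verifying that the stacky loci may be taken inside $D$ and $E$ so that destackification applies, and that the destackifications are isomorphisms over $U$ so that $\bar\varphi$ is a well-defined proper birational map of standard pairs with simple normal crossings boundary to which equivariant weak factorization can be fed.
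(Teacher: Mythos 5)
Your strategy breaks down at its very first step. You destackify $(X,D)$ and $(Y,E)$ over $S=\Spec\kk$, but Theorem~\ref{th-rel-destack} requires the restriction of the structure map to $X\setminus D$ to be representable over $S$, which for $S=\Spec\kk$ means that $U=X\setminus D$ must be an algebraic space. This is \emph{not} among the hypotheses of Theorem~\ref{th-wf-dm}: a standard pair over $\kk$ (Definition~\ref{def-stacky-standard}) allows $U$ to be an arbitrary separated Deligne--Mumford stack --- for instance a gerbe over a variety, or, in the case actually needed for Theorem~\ref{t-main}, an arbitrary smooth quasi-projective Deligne--Mumford stack. You write that this representability is ``built into'' the setup and is supplied by the global quotient hypothesis together with finite inertia; it is not. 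Being a global quotient gives a representable morphism $X\to\BB G$ (equivalently $X\cong[V/\GL_n]$), hence representability of $U\to\BB G$, which is very different from representability of $U$ over $\Spec\kk$. Nor is this a hypothesis one can check away: if $U$ has generic stabilizers, then \emph{no} zig-zag of admissible stacky blow-ups can connect $(X,D)$ to $(X_\cs,D_\cs)$ at all, because admissible modifications have centers inside the boundary and are therefore isomorphisms over $U$, whereas $X\to X_\cs$ is not an isomorphism over $U$. So your middle passage through the absolute coarse spaces $\bar X$, $\bar Y$ cannot be part of a factorization of $\varphi$ of the required shape in that case. (A side correction: the abelianness of stabilizers needed to run destackification comes from the stacky locus being contained in a simple normal crossings divisor, not from the global quotient hypothesis.)

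This is precisely the difficulty the paper's proof is organized around, and the repair is to keep everything relative to the classifying stacks. The paper first builds a common roof $(Z,F)$ over both pairs (the normalization of $X\times Y$ in $U$, followed by resolution and principalization), and then applies relative destackification to $(Z,F)$ \emph{twice}: once over $\BB G$ and once over $\BB H$, where $X\to\BB G$ and $Y\to\BB H$ are the representable morphisms furnished by the global quotient hypothesis; only representability of $U\to\BB G$ (resp.\ $U\to\BB H$) is needed, and that holds automatically. The resulting relative coarse spaces $\underline{X}'$, $\underline{Y}'$ are then not algebraic spaces but stacks representable over $\BB G$ (resp.\ $\BB H$), and the induced proper birational morphisms $\underline{X}'\to X$, $\underline{Y}'\to Y$ are factored by equivariant weak factorization (Theorem~\ref{th-equiv-wf}) with the \emph{nontrivial} groups $G$ and $H$ --- this is where equivariance genuinely enters, whereas your proposal only ever invokes it with the trivial group. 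Under the extra assumption that $X\setminus D$ and $Y\setminus E$ are algebraic spaces, your argument is correct and is in fact a simplification of the paper's; but that special case covers neither the theorem as stated nor its application in the proof of Theorem~\ref{t-main}, where the common open $U$ is a smooth quasi-projective Deligne--Mumford stack that may well be stacky.
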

\begin{proof}
Since $X$ and $Y$ are global quotient stacks, we may choose representable morphisms
$X \to \BB G$ and $Y \to \BB H$ where $G$ and $H$ are linear algebraic groups.
Let $U \cong X \setminus D \cong Y \setminus E$ and let $\widetilde{U}$ be the normalization
of $X \times Y$ in the diagonal morphism $U \to X \times Y$.
Since $U \to U \times U$ is finite,
by the assumption that $U$ is separated,
it follows that the obvious morphisms from $\widetilde{U}$ to $X$ and $Y$ are isomorphisms over $U$.
By using functorial resolution of singularities on $\widetilde{U}$ together with functorial principalization
on the complement of $U$,
we get a standard pair $(Z, F)$ such that $Z\setminus F \cong U$.
By construction, we have proper birational morphisms
$\alpha\colon (Z, F) \to (X, D)$ and $\beta\colon (Z, F) \to (Y, E)$ defined over
$\BB G$ and $\BB H$, respectively.

Now we apply relative destackification,
as described in Theorem~\ref{th-rel-destack},
to $(Z, F)$ twice ---
once relative $\BB G$ and once relative~$\BB H$.
This puts us in the situation described in the following diagram
\begin{equation}
\label{eq-weak-diagram}
\vcenter{
\xymatrix{
(X', D') \ar[d]_{\gamma'}\ar[dr]^{\alpha'} & & (Y', E') \ar[dl]_{\beta'}\ar[d]^{\delta'} \\
(\underline{X}', \underline{D}') \ar[d]_\gamma & (Z, F) \ar[dl]_\alpha\ar[dr]^\beta & (\underline{Y}', \underline{E}') \ar[d]^{\delta}\\
(X, D) & & (Y, E) \\
}
}
\end{equation}
Here the morphism $\alpha'$ is the composition of the admissible stacky blow-ups in the 
destackification sequence \eqref{eq-destack-sequence},
and $\gamma'$ is the canonical morphism to the relative coarse space over $\BB G$.
Recall that $\gamma'$ factors as a sequence of admissible root constructions.
Since $X$ is representable over $\BB G$,
we obtain a canonical morphism $\gamma$ making the left-hand side of the diagram commute.
Now $\gamma$ is a representable proper morphism over $\BB G$,
so Theorem~\ref{th-equiv-wf} applies and $\gamma$ factors weakly as a sequence
of admissible blow-ups.
Since the right-hand side of diagram \eqref{eq-weak-diagram} can be treated similarily,
but now relative $\BB H$,
we obtain the desired weak factorization of $\varphi$.
\end{proof}

The version of the weak factorization theorem stated above
suffices to prove Theorem~\ref{t-main}.
To get a full proof of Theorem~\ref{th-intro-wf-dm}, we need to work a little bit more.

\begin{proof}[Proof of Theorem~\ref{th-intro-wf-dm}]
Given a proper birational map $\varphi \colon Y \dashrightarrow X$,
as in the statement of the theorem,
we reduce to the case where we have a proper birational map of standard
pairs as in the statement of Theorem~\ref{th-wf-dm} by applying principalization
to the exceptional locus of both $Y$ and $X$.
It is now clear from Theorem~\ref{th-wf-dm} that we get a weak factorization
which satisfies \ref{it-wf-stacky-blow-up}, \ref{it-wf-invariant-locus} and~\ref{it-wf-normal-crossings}
in the statement of Theorem~\ref{th-intro-wf-dm}.
Furthermore, it is easy to see that it satisfies \ref{it-wf-defined} by inspecting
diagram~\eqref{eq-weak-diagram},
giving $(Z, F)$ index $n_0$,
and using that each stack in the equivariant weak factorization of $\gamma$ (resp.~$\delta$) is defined
over~$X$ (resp.~$Y$) as stated in Theorem~\ref{th-equiv-wf}.

Finally, to also show \ref{it-wf-projective}, we need to choose $(Z, F)$ more carefully.
Consider the diagram
\begin{equation}
\vcenter{
\xymatrix{
X'_\cs \ar[d]_{\gamma'_\cs}\ar[dr]^{\alpha'_\cs} & & Y'_\cs \ar[dl]_{\beta'_\cs}\ar[d]^{\delta'_\cs} \\
\underline{X}'_\cs \ar[d]_{\gamma_\cs} & Z_\cs \ar[dl]_{\alpha_\cs}\ar[dr]^{\beta_\cs} & \underline{Y}'_\cs \ar[d]^{\delta_\cs}\\
X_\cs & & Y_\cs \\
}
}
\end{equation}
obtained from \eqref{eq-weak-diagram} by passing to coarse spaces.
First we note that a representable projective morphism of separated Deligne--Mumford
stacks induce a projective morphism on the coarse spaces (see \cite[Proposition~2]{mo206117}).
Since a root construction induces an isomorphism on the coarse spaces,
it follows that $\alpha'_\cs$ and~$\beta'_\cs$ are projective and that $\gamma'_\cs$ and~$\delta'_\cs$
are isomorphisms.
Since \ref{it-wf-projective} holds for equivariant weak factorization
it thus suffices to show that $(Z, F)$ can be chosen such that $\alpha_\cs$ and~$\beta_\cs$
are projective.
To achieve this, we first construct $\widetilde{U}$ as in the proof of Theorem~\ref{th-wf-dm}.
By applying the version of Chow's Lemma due to Gruson--Raynaud~\cite[Corollaire~5.7.14]{gr1971} twice,
we obtain a birational modification $W \to \widetilde{U}_\cs$, which is an isomorphism over $U_\cs$,
such that both the induced morphisms $W \to X_\cs$ and~$W \to Y_\cs$ are projective.
Now, let $Z'$ be a desingularization of the pull back of $Z$ along $W \to Z_\cs$.
Applying principalization to the exceptional locus gives a standard pair $(Z', F')$
with the desired properties.
Now replacing $(Z, F)$ by $(Z', F')$ concludes the proof.
\end{proof}

\section{The Bittner presentation}
\label{bittner}
In this section, we prove Theorem~\ref{t-main}.
That is, we prove that the ring homomorphism
\begin{equation}
\label{eq-bittner}
\K_0(\DM^{\smpr}_k) \xrightarrow{\varphi} \K_0(\DM_k), \qquad \{X\} \mapsto \{X\},
\end{equation}
where $X$ is a smooth, proper Deligne--Mumford stack,
is an isomorphism.
We will follow the proof of the corresponding theorem for varieties given in~\cite{bittner2004}
closely.

Let $X$ be a smooth, proper Deligne--Mumford stack and let $D$ be a simple normal crossings divisor on $X$.
We will define a class $\{X, D\}$ in $\K_0(\DM^{\smpr}_k)$ associated to such a pair.
For the purpose of this definition, it will be convenient to generalize the notion of simple normal crossings
divisor to include the situation where $D$ is allowed to have smooth components of codimension~$0$.
Let $\{D_i\}$ be a finite family of smooth (generalized) divisors on $X$ indexed by a set $I$,
such that $D = \bigcup_{i \in I} D_i$.
Note that we neither require the divisors $D_i$ to be irreducible nor distinct.
Given a subset $J \subset I$,
we let $D_J$ denote the intersection $\bigcap_{i \in J} D_i$.
Here we use the convention $D_\emptyset = X$.
Now we define the expression
\begin{equation}
\label{eq-inc-exl}
\{X, D\} := \sum_{J \subset I} (-1)^{|J|} D_J
\end{equation}
in $\K_0(\DM^{\smpr}_k)$.
It is clear that the image of $\{X, D\}$ in $\K_0(\DM_k)$ under the homomorphism
$\varphi$ defined in \eqref{eq-bittner} is equal to $\{X\setminus D\}$.
Indeed, this follows from a simple argument using the principle of inclusion and exclusion.
The notation is motivated by the following lemma.

\begin{lemma}
The element $\{X, D\}$ in $\K_0(\DM^{\smpr}_k)$,
as defined above,
does not depend on the particular choice of family $\{D_i\}_{i \in I}$.
\end{lemma}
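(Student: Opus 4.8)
The plan is to compare an arbitrary admissible family with the \emph{canonical} family indexed by the distinct irreducible components of $D$, each occurring once, and to pass between the two through two elementary moves, each of which visibly preserves the expression \eqref{eq-inc-exl}. The computations behind the moves are short inclusion--exclusion bookkeeping, so I will only indicate them. The one genuine ingredient I need beforehand is \emph{disjoint additivity} in $\K_0(\DM^{\smpr}_k)$: for smooth, proper $A$ and $B$ one has $\{A \sqcup B\} = \{A\} + \{B\}$. This is not among the defining relations, but it follows from \eqref{eq-blow-up-rel} by blowing up $A \sqcup B$ in the connected component $A$, viewed as a codimension-zero smooth center: the blow-up is $B$ and its exceptional divisor is empty, so the blow-up relation together with $\{\emptyset\} = 0$ reads $\{B\} = \{A \sqcup B\} - \{A\}$.

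\emph{Move 1 (splitting a component).} Suppose some member $D_{i_0}$ of the family decomposes as a disjoint union $D_{i_0} = D' \sqcup D''$ of smooth divisors, and form the new family in which the index $i_0$ is replaced by two indices carrying $D'$ and $D''$. I claim \eqref{eq-inc-exl} is unchanged. Splitting the sum according to which of the two new indices lie in $J$, the only strata that are not already present are $D' \cap D_K$, $D'' \cap D_K$ and $D' \cap D'' \cap D_K$; the last is empty and contributes $0$, while disjoint additivity gives $\{D' \cap D_K\} + \{D'' \cap D_K\} = \{D_{i_0} \cap D_K\}$ with matching signs, so the two expressions agree term by term.

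\emph{Move 2 (deleting a repetition).} Suppose two distinct indices $a \neq b$ carry literally the same divisor, $D_a = D_b$, and delete $b$. Grouping the sum by the intersection $J \cap \{a, b\}$ and using $D_a \cap D_b = D_a$, the three terms with $J \cap \{a,b\}$ equal to $\{a\}$, $\{b\}$, $\{a,b\}$ all involve the single stratum $D_a \cap D_K$, and their signs $(-1)^{|K|+1}, (-1)^{|K|+1}, (-1)^{|K|}$ sum to $(-1)^{|K|+1}$. This is exactly the contribution of the index $a$ alone in the reduced family, so \eqref{eq-inc-exl} is again unchanged.

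Finally I combine the two moves. Each $D_i$ is smooth, hence the disjoint union of its (pairwise disjoint) irreducible components, so repeated application of Move 1 reduces any family to one indexed by irreducible components of $D$, each possibly appearing several times; repeated application of Move 2 then deletes all repetitions. Since $\bigcup_i D_i = D$, every irreducible component of $D$ survives exactly once, so the procedure always terminates at the canonical family, whatever family we started from. As both moves preserve \eqref{eq-inc-exl}, the value is the canonical one in every case, proving independence. The only point requiring real care is the disjoint-additivity reduction, since it is what licenses Move 1; once that is in hand, the remaining steps are purely formal sign computations.
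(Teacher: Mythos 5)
Your reduction procedure has a genuine gap: Moves 1 and 2 do not always terminate at the canonical family, because the obstruction is \emph{containment} between members, not just equality. Since the generalized notion of divisor allows smooth components of codimension zero, a member of the family can be an irreducible divisor strictly contained in another member. Concretely, take $X = \mathbb{P}^2$, $D = X$ (one codimension-zero component), and the family $D_1 = X$, $D_2 = L$ a line: both members are smooth, irreducible and distinct, and their union is $D$, so this is a legitimate family; yet neither move applies, and the family is not the canonical one. Thus your closing claim that after Moves 1 and 2 ``every irreducible component of $D$ survives exactly once'' is false: after those moves you know only that the members are distinct and irreducible, and the implication ``irreducible member $\Rightarrow$ irreducible component of $D$'' holds in pure codimension one but fails once codimension-zero components are present. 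This is not an ignorable corner case: in the paper's own application of the lemma (Proposition~\ref{pr-transform}, admissible case $Z \subset D$, say $Z \subset D_{i_0}$), the induced family $\{Z \cap D_i\}$ on $Z$ has the member $Z \cap D_{i_0} = Z$ containing all the others --- exactly the configuration your moves cannot simplify.

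The repair is cheap: strengthen Move 2 to delete an index $b$ whenever $D_b \subseteq D_a$ for some $a \neq b$, equality not required. The bookkeeping changes slightly: grouping by $J \cap \{a,b\}$ and using $D_a \cap D_b = D_b$, the terms with $J \cap \{a,b\} = \{b\}$ and $J \cap \{a,b\} = \{a,b\}$ carry signs $(-1)^{|K|+1}$ and $(-1)^{|K|}$ on the \emph{same} stratum $D_b \cap D_K$ and cancel outright, leaving exactly the expression for the reduced family. With this stronger move, once Move 1 has made all members irreducible and no member contains another, every member is an irreducible component of $D$ and every component occurs exactly once, so termination is genuine. For comparison, the paper's proof avoids any reduction procedure: after the same disjoint-additivity step (which it merely asserts --- your derivation by blowing up a connected component is Bittner's argument and is a worthwhile addition), it regards \eqref{eq-inc-exl} as a formal $\mathbb{Z}$-linear combination of smooth irreducible closed substacks, embeds the group of such sums into $\mathbb{Z}^{|X|}$ via characteristic functions, and observes that the image equals $\chi_{X \setminus D}$ by inclusion--exclusion, hence depends only on $D$; injectivity then finishes the proof in one stroke, with no case analysis on the shape of the family.
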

\begin{proof}
Since $\K_0(\DM_k)$ is additive for disjoint unions,
it suffices to prove that the expression \eqref{eq-inc-exl} is well defined as a
formal sum of smooth, irreducible, closed substacks of $X$.
But clearly the group of such formal sums injects into the group $\mathbb{Z}^{|X|}$
via the map taking a substack to the characteristic function of its underlying topological subspace.
In particular, the expression only depends on $D = \bigcup_{i \in I} D_i$
by the principle of inclusion and exclusion.
\end{proof}

Next, we investigate how expressions of the form $\{X, D\}$ are transformed by stacky
blow-ups.

\begin{proposition}
\label{pr-transform}
Let $(X, D)$ be a standard pair and let $Z \subset X$ be a smooth substack
having normal crossings with $D$.
Let $\widetilde{X}$ be a stacky blow-up of $X$ in $Z$ and let $\widetilde{D}$ be the total
transform of $D$ to $\widetilde{X}$.
Then
$$
\{\widetilde{X}, \widetilde{D}\} = \{X, D\} - \{Z, Z \cap D\}
$$ in $\K_0(\DM^{\smpr}_k)$.
In particular, if the stacky blow-up is admissible in the sense that $Z \subset D$,
then $\{\widetilde{X}, \widetilde{D}\} = \{X, D\}$.
\end{proposition}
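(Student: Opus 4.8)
The plan is to compute the change in the incidence-exclusion expression $\{X,D\}$ directly from the definition \eqref{eq-inc-exl}, by writing down an index set for the components of the total transform $\widetilde{D}$ and tracking which intersection strata survive the stacky blow-up. Write $D = \bigcup_{i\in I} D_i$ as a union of smooth components. Since $Z$ has normal crossings with $D$, after possibly refining the family I may assume $Z$ is cut out transversally by the $D_i$, so that $Z = D_{J_0}$ for some subset $J_0 \subset I$ together with the extra condition of being an intersection of $Z$ itself if $Z \not\subset D$. The total transform is $\widetilde{D} = \widetilde{D}_1 \cup \cdots \cup \widetilde{D}_m \cup E$, where each $\widetilde{D}_i$ is the birational transform of $D_i$ and $E$ is the exceptional divisor; so the natural index set for $\widetilde D$ is $I \sqcup \{\ast\}$, with $\widetilde D_\ast := E$.

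**Key computation.**
Next I would expand $\{\widetilde X,\widetilde D\} = \sum_{J \subset I\sqcup\{\ast\}} (-1)^{|J|}\{\widetilde D_J\}$ and split the sum according to whether $\ast \in J$. For the terms with $\ast \notin J$, the stratum $\widetilde D_J$ is the birational transform $\widetilde{D_J}$, which is the stacky blow-up of $D_J$ along $Z\cap D_J$. For the terms with $\ast\in J$, writing $J = J' \sqcup\{\ast\}$, the stratum $\widetilde D_{J'}\cap E$ is the restriction of $E$ to the birational transform of $D_{J'}$, i.e. the exceptional divisor of the blow-up of $D_{J'}$ in $Z\cap D_{J'}$, which is a projective-bundle-type construction (a root stack over a projective bundle) over the center $Z\cap D_{J'}$. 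The essential point is that in $\K_0(\DM^{\smpr}_k)$ each such exceptional piece, when summed against its birational-transform partner with the opposite sign, collapses: the pair
\[
\{\widetilde{D_{J'}}\} - \{\widetilde{D_{J'}}\cap E\}
\]
equals $\{D_{J'}\} - \{Z\cap D_{J'}\}$ by a single application of the stacky blow-up relation \eqref{eq-blow-up-rel}, since $\widetilde{D_{J'}}$ is a stacky blow-up of $D_{J'}$ with center $Z\cap D_{J'}$ and exceptional divisor $\widetilde{D_{J'}}\cap E$. Grouping the whole double sum into these matched pairs indexed by $J'\subset I$ and resumming by inclusion--exclusion then yields
\[
\{\widetilde X,\widetilde D\} = \sum_{J'\subset I}(-1)^{|J'|}\bigl(\{D_{J'}\} - \{Z\cap D_{J'}\}\bigr) = \{X,D\} - \{Z, Z\cap D\}.
\]
The final ``in particular'' clause is then immediate: if $Z\subset D$ then $Z\cap D = Z$ viewed as a divisor on itself has codimension~$0$, and $\{Z, Z\cap D\}$ computes the class of the empty stack $Z\setminus Z = \emptyset$, which is $0$ by the relation $\{\emptyset\}=0$.

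**Anticipated obstacle.**
The main obstacle will be the bookkeeping for the exceptional strata, specifically verifying that each intersection $\widetilde{D_{J'}}\cap E$ really is the exceptional divisor of the stacky blow-up of $D_{J'}$ in $Z\cap D_{J'}$, and that the induced map is again an admissible stacky blow-up of the smaller standard pair so that \eqref{eq-blow-up-rel} legitimately applies. This requires the normal-crossings hypothesis between $Z$ and $D$ in an essential way, to guarantee that restricting the blow-up of $X$ to each stratum $D_{J'}$ commutes with blowing up the stratum in its own center, and that the root construction along $E$ restricts compatibly. I expect to handle this by checking everything étale-locally, where a standard pair looks like a product of a smooth base with a union of coordinate hyperplanes and $Z$ is a coordinate subspace, reducing the claim to the transversality of blow-ups with coordinate strata. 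Once that local structure is in place, the resummation is a purely formal inclusion--exclusion identity and presents no further difficulty.
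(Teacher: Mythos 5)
Your proposal is correct and follows essentially the same route as the paper: index the total transform by $I\sqcup\{\ast\}$ (the paper uses $\{\bullet\}\cup I$), pair each stratum $\widetilde{D}_{J}$ with $\widetilde{D}_{J\cup\{\ast\}}$, observe that $\widetilde{D}_J$ is the stacky blow-up of $D_J$ in $Z\cap D_J$ with exceptional divisor $\widetilde{D}_{J\cup\{\ast\}}$, apply the defining relation \eqref{eq-blow-up-rel} pair by pair, and resum. The only cosmetic differences are your unused (and unnecessary) normalization $Z = D_{J_0}$ and your justification of the final clause, which is cleaner if argued directly from the inclusion--exclusion definition with $Z$ as a codimension-$0$ component, giving $\{Z,Z\cap D\} = \{Z\}-\{Z\} = 0$ in $\K_0(\DM^{\smpr}_k)$ rather than via the image in $\K_0(\DM_k)$.
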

\begin{proof}
We index the components $\widetilde{D}_i$ of $\widetilde{D}$ by the disjoint
union $\widetilde{I} = \{\bullet\} \cup I$,
such that $\widetilde{D}_i$ is the strict transform of $D_i$ if $i \in I$
and $\widetilde{D}_\bullet$ is the exceptional divisor of the stacky blow-up.
Rearranging the terms in the expression for $\{\widetilde{X},
\widetilde{D}\}$ yields
\begin{equation}
\label{eq-stacky-blow-up}
\{\widetilde{X}, \widetilde{D}\}
=
\sum_{J \subset I} (-1)^{|J|}
\left(\widetilde{D}_{J} - \widetilde{D}_{J\cup\{\bullet\}}\right).
\end{equation}
For $J \subset I$, we let $Z_J$ denote the intersection $Z \cap D_J = \bigcap_{i \in J}(Z \cap D_i)$.
Note that $\widetilde{D}_J$ is the stacky blow up of $D_J$ in the center
$Z_J$, and that $\widetilde{D}_{J \cup\{\bullet\}}$ is the exceptional
divisor of this blow-up.
Using the defining relations for $\K_0(\DM^{\smpr}_k)$,
it follows that the expression \eqref{eq-stacky-blow-up} equals
$$
\sum_{J \subset I} (-1)^{|J|}
\left(D_{J} - Z_{J}\right)
=
\{X, D\}
- \{Z, Z \cap D\},
$$
as desired.
\end{proof}

We are now ready to put the pieces together and prove that the ring homomorphism $\varphi$
defined in \eqref{eq-bittner} is an isomorphism.
\begin{proof}[Proof of Theorem~\ref{t-main}]
Let $\K_0(\DM^\sqp_k)$ denote the Grothendieck group generated by smooth,
quasi-projective Deligne--Mumford stacks subject to the usual scissors relations~\eqref{eq-scissor}.
We define a map
\begin{equation}
\label{eq-section}
\K_0(\DM^\sqp_k) \xrightarrow{\psi} \K_0(\DM^\smpr_k), \qquad \{U\} \mapsto \{X, X \setminus U\},
\end{equation}
where $U$ is a smooth quasi-projective Deligne--Mumford stack and $X$ is a compactification of $U$
by a projective Deligne--Mumford stack such that the exceptional locus $D := X \setminus U$
is a simple normal crossings divisor.
Such a compactification exists by~\cite[Theorem~5.3]{kresch2009}
combined with functorial resolution of singularities and principalization.

We need to show that $\psi$ is well defined.
Let $(X', D')$ be another compactification of $U$.
Since $X$ and $X'$ are global quotient stacks,
Theorem~\ref{th-wf-dm} applies and the
birational map $(X, D) \dashrightarrow (X', D')$ factors weakly as
a sequence of admissible stacky blow-ups.
Hence the expression $\{X, D\}$ only depends on $U$ by Proposition~\ref{pr-transform}.
Furthermore, if $V \subset U$ is a smooth closed substack,
then we can apply functorial embedded resolution of singularities on the closure of $V$ in $X$
compatible with~$D$.
Hence we may assume that we have a compactification $U \subset X$ such that the closure $Z$ of $V$
is smooth and has simple normal crossings with the exceptional locus $D = X \setminus U$.
In particular, the blow-up $\widetilde{X}$ of $X$ in $Z$ is a compactification of $U \setminus V$ with exceptional locus
given by the total transform~$\widetilde{D}$.
Hence Proposition~\ref{pr-transform} shows that~$\psi$ respects the scissors relations,
so the map is indeed well defined.

Now consider the sequence of maps
$$
\K_0(\DM^\sqp_k)
\xrightarrow{\psi} \K_0(\DM^\smpr_k) 
\xrightarrow{\varphi} \K_0(\DM_k). 
$$
It is clear that the composition $\varphi\circ\psi$ equals the map induced by the inclusion of categories,
which is an isomorphism by Proposition~\ref{pr-alt-def}.
Thus in order to show that $\varphi$ is an isomorphism,
it suffices to show that $\psi$ is surjective.
Assume that $X$ is an arbitrary irreducible, smooth, proper Deligne--Mumford stack.
Now choose a sequence of blow-ups
\begin{equation}
\label{eq-blow-up}
X_n \to \cdots \to X_0 = X
\end{equation}
in smooth centers such that $X_n$ is a smooth, projective Deligne--Mumford stack.
Such a sequence exists by \cite[Theorem~4.3]{choudhury2012}.
By the blow-up relations \eqref{eq-blow-up-rel},
we have
\begin{equation}
\label{eq-bl}
\{X_i\} = \{X_{i + 1}\} - \{E_{i+1}\} + \{Z_i\}
\end{equation}
where $Z_i \subset X_i$ is the center of the blow-up at step $i$ in the sequence \eqref{eq-blow-up}
and $E_{i+1} \subset X_{i+1}$ is the exceptional divisor.
By induction on the dimension of $X$,
we may assume that $\{E_{i+1}\}$ and $\{Z_i\}$ lie in the image of $\psi$.
Hence we may assume that the same holds for the right-hand side of \eqref{eq-bl}
by descending induction on~$i$.
In particular, the class $\{X_0\} = \{X\}$ lies in the image of $\psi$,
which concludes the proof.
\end{proof}

\bibliographystyle{myalpha}
\bibliography{main}

\end{document}